\begin{document}

%%%%%%%%%%%%%%%%%%%%%%%%%%%%%%%%%%%%%%%%%%%%%%%%%%%%%%%%%%%
%%% define "definition" and "qed" box
\theoremstyle{plain}
\newtheorem{Definition}{Definition}
\newtheorem{Theorem}{Theorem}
\newtheorem{Corollary}[Theorem]{Corollary}
\newtheorem{Lemma}[Theorem]{Lemma}
\newtheorem{Proposition}[Theorem]{Proposition}

\newtheorem{theorem}[Theorem]{Theorem}
\newtheorem{definition}[Theorem]{Definition}
\newtheorem{lemma}[Theorem]{Lemma}
\newtheorem{proposition}[Theorem]{Proposition}
\newtheorem{example}[Theorem]{Example}
\newtheorem{corollary}[Theorem]{Corollary}
\newtheorem{conjecture}[Theorem]{Conjecture}
\newenvironment{note}{\noindent{\bf Note:}}{\medskip}

\def\squareforqed{\hbox{\rlap{$\sqcap$}$\sqcup$}}
\def\qed{\ifmmode\squareforqed\else{\unskip\nobreak\hfil
\penalty50\hskip1em\null\nobreak\hfil\squareforqed
\parfillskip=0pt\finalhyphendemerits=0\endgraf}\fi}
%%%%%%%%%%%%%%%%%%%%%%%%%%%%%%%%%%%%%%%%%%%%%%%%%%%%%%%%%%%

\def\cal#1{{\mathcal #1}}
\def\bb#1{{\mathbb #1}}
\def\bsigma#1{\mathbf{\Sigma}^0_{#1}}
\def\bpi#1{\mathbf{\Pi}^0_{#1}}
\def\bdelta#1{\mathbf{\Delta}^0_{#1}}
\def\analytic{\mathbf{\Sigma}^1_1}
\def\coanalytic{\mathbf{\Pi}^1_1}
\def\bianalytic{\mathbf{\Delta}^1_1}
\def\baire{\omega^{\omega}}
\def\cntbased{$\omega${\bf Top}$_0$}

\def\bdiff#1{\mathcal{D}_{#1}}
\def\mybf#1{{\mathbf #1}}

\def\cofinite{\omega_{cof}}

\def\wayabovearrow{\rlap{\raise-.25ex\hbox{$\shortuparrow$}}\raise.25ex\hbox{$\shortuparrow$}}
\def\waybelowarrow{\rlap{\raise.25ex\hbox{$\shortdownarrow$}}\raise-.25ex\hbox{$\shortdownarrow$}}

\def\abs#1{{\lvert#1\rvert}}

%%%%%%%%%%%%%%%%%%%%%%%%%%%%%%%%%%%%%%%%%%%%%%%%%%%%%%%%%%%%%%%%

%\begin{frontmatter}

\title{Levels of discontinuity, limit-computability, and jump operators}

\author{Matthew de Brecht\thanks{This paper is dedicated to Victor Selivanov in celebration of his 60th birthday and his valuable contributions to the descriptive set theory of general topological spaces. The author thanks Arno Pauly, Luca Motto Ros, Vasco Brattka, and Takayuki Kihara for valuable discussions and comments on earlier drafts of this paper.}}

\runningtitle{Levels of discontinuity and jump operators}
\runningauthor{Matthew de Brecht}

%\email{matthew@nict.go.jp}
%\address{NICT, CiNet\\Osaka, Japan}

\altmaketitle

\begin{abstract}
We develop a general theory of jump operators, which is intended to provide an abstraction of the notion of ``limit-computability'' on represented spaces. Jump operators also provide a framework with a strong categorical flavor for investigating degrees of discontinuity of functions and hierarchies of sets on represented spaces. We will provide a thorough investigation within this framework of a hierarchy of ${\bf \Delta}^0_2$-measurable functions between arbitrary countably based $T_0$-spaces, which captures the notion of computing with ordinal mind-change bounds. Our abstract approach not only raises new questions but also sheds new light on previous results. For example, we introduce a notion of ``higher order'' descriptive set theoretical objects, we generalize a recent characterization of the computability theoretic notion of ``lowness'' in terms of adjoint functors, and we show that our framework encompasses ordinal quantifications of the non-constructiveness of Hilbert's finite basis theorem.  
\end{abstract}

%\end{frontmatter}

%%%%%%%%%%%%%%%%%%%%%%%%%%%%%%%%%%%%%%%%%%%%%%%%%%%%%%%%%%%%%%%%

\section{Introduction}

This paper is concerned with two relatively new developments in the field of descriptive set theory. 

The first development is the extension of the classical descriptive set theory for metrizable spaces to more general topological spaces and mathematical structures. Although it is not uncommon, particularly in measure theory, to define the Borel algebra for an arbitrary topological space, detailed analysis of the Borel hierarchy has been mainly restricted to the class of metrizable spaces, or possibly Hausdorff spaces on rare occasion. However, relatively recent work by V. Selivanov \cite{selivanov, selivanov1984,selivanov2004,selivanov2007,selivanov2008}, D. Scott \cite{scott_datatypes}, A. Tang \cite{tang1979,tang1981}, and the author \cite{debrecht2013,debrecht_etal_4}, have demonstrated that a significant portion of the descriptive set theory of metrizable spaces generalizes naturally to countably based $T_0$-spaces. This development opens up the possibility of finding new applications of descriptive set theory to mathematical fields heavily relying on non-Hausdorff topological spaces, such as theoretical computer science (e.g., $\omega$-continuous domains) and modern algebraic geometry (e.g., the Zarisiki topology on the prime spectrum of a countable commutative ring). These generalizations can also shed new light on old results. For example, although the Gandy-Harrington space (a non-metrizable space that plays an important role in effective descriptive set theory) cannot be topologically embedded into any Polish space, it can be embedded as a co-analytic set into a quasi-Polish space \cite{debrecht2013}.

The second development is a shift from a focus on the complexity of subsets of a space to a focus on the complexity of functions between spaces. Certainly Baire's hierarchy of discontinuous functions has a long history, but it is fair to say that Borel's hierarchy of sets has played a more prominent role in the development of the theory. However, recently there has been growing interest within the field of computable analysis concerning the relationship between hierarchies of discontinuous functions, Turing degrees, and limit-computability, in particular by researchers such as V. Brattka, P. Hertling, A. Pauly, M. Ziegler, T. Kihara, and the author \cite{brattka, brattka_makananise, hertling1996, brattka_debrecht_pauly, ziegler,kihara2013, debrecht_etal_4}. Furthermore, recent extensions of the Wadge game by researchers such as A. Andretta, L. Motto Ros, and B. Semmes \cite{andretta,ros_semmes,ros_thesis,ros2011, semmes_thesis} have provided new classifications of discontinuous functions and new methods to generalize classical results like the Jayne-Rogers theorem \cite{jayne_rogers}. V. Selivanov has made contributions in this area as well, for example by generalizing the Hausdorff-Kuratowski theorem for the difference hierarchy to a hierarchy of $\bdelta 2$-measurable functions into finite discrete spaces \cite{selivanov2007,selivanov2008}. 

These two developments should not be considered independent. For example, if we simply add to our framework the two-point non-trivial \emph{non-metrizable} space $\cal S$, known as the Sierpinski space, then we can obtain an elegant bijective correspondence between the family of $\bsigma \alpha$-subsets of a space $X$ and the family of $\bsigma \alpha$-measurable functions from $X$ to $\cal S$. This is a natural generalization of the known bijection between open subsets of a space and continuous functions into the Sierpinski space, and is also similar to the role of the subobject classifier in a topos. Domain theory teaches us that the mathematical object $\bsigma \alpha(X)$, now viewed as a family of functions into $\cal S$, will certainly not be metrizable, even if we can hope for it to be a topological space at all.

Continuing a little more our analogy with a topos, if we wish to work within a single category then we are faced with a dilemma if we have only one ``sub-object'' classifier $\cal S$ but want a \emph{hierarchy} of classes of subobjects such as $\bsigma 1(X)\subseteq \bsigma 2(X)\subseteq \cdots$. One natural solution is to abandon the idea of having a single subobject classifier, and instead have a sequence $\cal S_1,\cal S_2,\ldots$ of subobject classifiers that respectively classifiy the families $\bsigma 1, \bsigma 2, \ldots$. Such a theory would be very unwieldy if the subobject classifiers were all unrelated, but we might have some hope for the theory if the subobject classifiers $\cal S_1,\cal S_2,\ldots$ are defined as the iterates of a single endofunctor $F$ applied to a single subobject classifier $\cal S$. We now only have to worry about which functors $F$ to consider, and what the ``base'' subobject classifier $\cal S$ should be.

This abstract view is closely related to recent work initiated by A. Pauly on synthetic descriptive set theory \cite{pauly2012talk, pauly_debrecht2013}. Ultimately, an axiomatic approach in the same spirit as topos theory would be most desireable, as it might help expose connections between the descriptive set theory of general topological spaces and the descriptive complexity of finite structures \cite{addison}. However, it seems a little premature to attempt that now, and instead we develop these ideas within the category of represented spaces and continuously realizable functions \cite{bauer}. In this context, we introduce (topological) ``jump operators'', which modulate the representation of a space and in effect play the role of the endofunctors $F$ described above.

The concept of a jump operator that we present here has its roots in the work of M. Ziegler \cite{ziegler} and V. Brattka \cite{brattka,brattka_makananise}, where numerous connections are made between levels of discontinuity, limit-computability, and the representation of a function's output space. The hierarchy of discontinuity that jump operators characterize turns out to be a subset of the strong Weihrauch degrees \cite{brattka_gherardi,brattka_gherardi2}, but we believe that the categorical framework that jump operators provide has much to offer.

This paper is organized into five major sections. After this Introduction, we will develop the general theory of (topological) jump operators. The third major section will investigate a lower portion of the jump operator hierarchy consisting of $\bdelta 2$-measurable functions. Our main contribution here is to extend some previous results concerning functions between metrizable spaces to functions between arbitrary countably based $T_0$-spaces. The results in this section are also important because they demonstrate that the jump operator framework is powerful enough to characterize functions as finely as P. Hertling's hierarchy of discontinuity levels \cite{hertling_thesis,hertling1996}. The fourth major section presents several examples and applications, such as connections with the difference hierarchy, a quantification of the non-constructiveness of Hilbert's basis theorem in terms of the ordinal $\omega^\omega$ (essentially due to S. Simpson \cite{simpson1988} and F. Stephan and Y. Ventson \cite{stephan_ventsov}), and show some applications to the Jayne-Rogers theorem. It is our attempt to find a common thread between the results in this section that should be considered new, more so than the results themselves, so in several cases we omit proofs. We conclude in the fifth major section. 

We will expect that the reader is familiar with classical descriptive set theory \cite{kechris} and domain theory \cite{etal_scott}. The reader should also consult \cite{selivanov} and \cite{debrecht2013} for definitions and results concerning the descriptive set theory of arbitrary countably based $T_0$-spaces. Although we will not be concerned much with computability issues, the reader will benefit from an understanding of the Type Two Theory of Effectivity \cite{weihrauch}. In particular, we will make much use of M. Schr\"{o}der's extended definition of an admissible representation \cite{schroder}, as well as the notion of realizability of functions between represented spaces (see \cite{bauer}). 

Our notation will follow that of \cite{debrecht2013}. The following modification of the Borel hierarchy, due to V. Selivanov, is required in order to provide a meaningful classification of the Borel subsets of non-metrizable spaces.

\begin{definition}
Let $X$ be a topological space. For each ordinal $\alpha$ ($1\leq \alpha < \omega_1$) we define $\bsigma \alpha(X)$ inductively as follows.
\begin{enumerate}
\item
$\bsigma 1(X)$ is the set of all open subsets of $X$.
\item
For $\alpha>1$, $\bsigma \alpha(X)$ is the set of all subsets $A$ of $X$ which can be expressed in the form
\[A=\bigcup_{i\in\omega}B_i\setminus B'_i,\]
where for each $i$, $B_i$ and $B'_i$ are in $\bsigma {\beta_i}(X)$ for some $\beta_i<\alpha$.
\end{enumerate}
We define $\bpi \alpha(X)= \{X\setminus A\,|\, A\in \bsigma \alpha(X)\}$ and $\bdelta \alpha(X)=\bsigma \alpha(X)\cap \bpi \alpha(X)$.
\qed
\end{definition}

The above definition is equivalent to the classical definition of the Borel hierarchy for metrizable spaces, but it differs for more general spaces. 

A function $f\colon X\to Y$ is $\bsigma \alpha$-measurable if and only if $f^{-1}(U)\in\bsigma \alpha(X)$ for every open subset $U$ of $Y$. We will also be interested in $\bdelta 2$-measurability, which requires the preimage of every open set to be a $\bdelta 2$-set.

Later in the paper we will present some results specific to \emph{quasi-Polish} spaces, which are defined as the countably based spaces that admit a Smyth-complete quasi-metric. Polish spaces and $\omega$-continuous domains are examples of quasi-Polish spaces. A space is quasi-Polish if and only if it is homeomorphic to a $\bpi 2$-subset of $\cal P(\omega)$, the power set of $\omega$ with the Scott-topology. The reader should consult \cite{debrecht2013} for additional results on quasi-Polish spaces.

%%%%%%%%%%%%%%%%%%%%%%%%%%%%%%%%%%%%%%%%%%%%%%%%%%%%%%%%%%%%%%%%%%%%%
\section{Jump operators}
%%%%%%%%%%%%%%%%%%%%%%%%%%%%%%%%%%%%%%%%%%%%%%%%%%%%%%%%%%%%%%%%%%%%%

A \emph{represented space} is a pair $\langle X, \rho\rangle$ where $X$ is a set and $\rho \colon \subseteq \baire \to X$ is a surjective partial function. If $\langle X, \rho_X\rangle$ and $\langle Y, \rho_Y\rangle$ are represented spaces and $f\colon\subseteq X\to Y$ is a partial function, then a function $F\colon\subseteq \baire\to\baire$ \emph{realizes} $f$, denoted $F\vdash f$, if and only if $f\circ\rho_X = \rho_Y\circ F$. If there exists a continuous realizer for $f$ then we say that $f$ is \emph{continuously realizable} and write $\vdash f$. Note that if $F \vdash f$ and $G\vdash g$, then $G\circ F \vdash g\circ f$, assuming the composition $g\circ f$ makes sense.

In some cases, a function $f\colon X\to Y$ between represented spaces may fail to be continuously realizable, but will become continuously realizable if we strengthen the information content of the representation of $X$ or weaken the information content of the representation of $Y$. The notion of ``limit computability'' is a common example of weakening the output representation. The motivation for the following definition is to create an abstract framework to investigate in a uniform manner how modifications of the information content of a representation effects the realizability of functions. 

\begin{definition}
A \emph{(topological) jump operator} is a partial surjective function $j\colon\subseteq \baire\to\baire$ such that for every partial continuous $F\colon \subseteq \baire\to\baire$, there is partial continuous $F' \colon \subseteq \baire\to\baire$ such that $F\circ j = j\circ F'$.
\qed
\end{definition}

The identity function $id \colon\baire\to\baire$ is a trivial example of a jump operator. Let $f\colon X\to Y$ be a function between represented spaces. A \emph{$j$-realizer} of $f$ is a function $F\colon \subseteq \baire \to \baire$ such that  $j \circ F \vdash f$. We use the notation $F\vdash_j f$ to denote that $F$ is a $j$-realizer for $f$. If there exists a continuous $j$-realizer for $f$ then we will say that $f$ is \emph{$j$-realizable} and write $\vdash_j f$.

The definition of ``jump operator'' given above is appropriate for the category of represented spaces and continuously realizable functions. Given a represented space $\langle X, \rho_X\rangle$ and a jump operator $j$, we can write $j(X)$ to denote the represented space $\langle X, \rho_X\circ j\rangle$. For each function $f\colon X\to Y$ between represented spaces, we define $j(f)$ to be the same function as $f$ but now interpretted as being between the represented spaces $j(X)$ and $j(Y)$. It is now clear that the definition of a jump operator is precisely what is needed to guarantee that $j(\cdot)$ is a well-defined endofunctor on the category of represented spaces.

If working in the category of represented spaces and \emph{computably} realizable functions, then the appropriate definition of a (computability theoretic) jump operator would be to require that for every \emph{computable} $F\colon \subseteq \baire\to\baire$, there is \emph{computable} $F'\colon \subseteq \baire\to\baire$ such that $F\circ j = j\circ F'$. The definition of $j$-realizability would also be modified in a similar manner. These modifications are necessary because, for example, the operator $\mathfrak{L}$ introduced in \cite{brattka_debrecht_pauly} to characterize low computability is a computability theoretic jump operator but it is \emph{not} a topological jump operator.

In this paper, unless explicitly mentioned otherwise we will assume the topological jump operator definition given above and shall drop the term ``topological''. However, much of the theory we develop will also apply to the computability theoretic jump operators as well.

Examples \ref{ex:jsigma2}, \ref{ex:jdelta}, and \ref{ex:jalpha} below provide typical examples of jump operators. In the following, $\langle \cdots \rangle_{n\in\omega}\colon (\baire)^\omega \to \baire$ is some fixed (computable) encoding of countable sequences of elements of $\baire$ as single elements of $\baire$.

\begin{example}\label{ex:jsigma2}
Define $j_{\bsigma 2}\colon \subseteq \baire\to\baire$ as:
\begin{eqnarray*}
\langle \xi_n \rangle_{n\in\omega}\in dom(j_{\bsigma 2}) &\Leftrightarrow& \xi_0, \xi_1, \ldots \mbox{ converges in } \baire\\
j_{\bsigma 2}(\langle \xi_n \rangle_{n\in\omega}) &=& \lim_{n\in\omega} \xi_n
\end{eqnarray*}
\qed
\end{example}

\begin{example}\label{ex:jdelta}
Define $j_{\Delta}\colon \subseteq \baire\to\baire$ as:
\begin{eqnarray*}
\langle \xi_n \rangle_{n\in\omega}\in dom(j_{\Delta}) &\Leftrightarrow&  (\exists n)(\forall m\geq n) [\xi_m=\xi_n] \\
j_{\Delta}(\langle \xi_n \rangle_{n\in\omega}) &=& \lim_{n\in\omega} \xi_n
\end{eqnarray*}
\qed
\end{example}

\begin{example}\label{ex:jalpha}
For each countable ordinal $\alpha$, define $j_{\alpha}\colon \subseteq \baire\to\baire$ as:
\begin{eqnarray*}
\langle \langle \beta_n \rangle_{\alpha}\diamond \xi_n \rangle_{n\in\omega} \in dom(j_{\alpha}) &\Leftrightarrow& (\forall n)  (\alpha>\beta_n \geq \beta_{n+1}) \mbox{ and }\\
&&  (\forall n) (\xi_n \not= \xi_{n+1}\Rightarrow \beta_n \not= \beta_{n+1}) \\
j_{\alpha}(\langle \langle \beta_n \rangle_{\alpha}\diamond \xi_n \rangle_{n\in\omega}) &=& \lim_{n\in\omega} \xi_n
\end{eqnarray*}
where $\langle \cdot \rangle_{\alpha}\colon \alpha \to \omega$ is some fixed encoding of ordinals less than $\alpha$ as natural numbers, and $\langle \beta \rangle_{\alpha}\diamond \xi$ is the element of $\baire$ obtained by prepending the encoding of $\beta$ to the beginning of $\xi$. 
\qed
\end{example}

The jump operators $j_{\bsigma 2}$ and $j_{\Delta}$ are also computability theoretic jump operators. If $\alpha<\omega^{CK}_1$ then $j_{\alpha}$ is a computability theoretic jump operator assuming that the encoding $\langle \cdot\rangle_{\alpha}$ is effective.

Intuitively, $j_{\bsigma 2}$-realizing a function only requires the realizer to output a sequence of ``guesses'' which is guaranteed to converge to the correct answer. Each guess is an infinite sequence in $\baire$, and convergence means that each finite prefix of the guess can be modified only a finite number of times. The jump operator $j_{\bsigma 2}$ and its connections with limit computability have been extensively studied in the field of computable analysis, for example by V. Brattka \cite{brattka, brattka_makananise} and M. Ziegler \cite{ziegler}. In the context of Wadge-like games and reducibilities, the jump operator $j_{\bsigma 2}$ essentially captures the notion of an ``eraser'' game (see \cite{ros_thesis, ros2011,semmes_thesis} and the references therein). It can also be shown that $j_{\bsigma 2}$ is a $\bsigma 2$-admissible representation of $\baire$ in the sense of \cite{debrecht_etal_4}. It follows from these results that if $X$ and $Y$ are countably based $T_0$-spaces, then $f\colon X\to Y$ is $j_{\bsigma 2}$-realizable if and only if $f$ is $\bsigma 2$-measurable. In general, for any ordinal $\alpha < \omega_1$,  any $\bsigma \alpha$-admissible representation $j_{\bsigma \alpha}$ of $\baire$ will be a jump operator which precisely captures the $\bsigma \alpha$-measurable functions between countably based spaces. For finite $n>2$ the inductive definition $j_{\bsigma {n+1}} = j_{\bsigma n}\circ j_{\bsigma 2}$ suffices. 

The jump operator $j_{\Delta}$ defines a stricter notion of limit computability. In this case, the realizer is also allowed to output guesses that converge to the correct answer, but the realizer can only modify his guess a finite number of times. This jump operator has been investigated by M. Ziegler \cite{ziegler} in terms of finite revising computation, and was shown in \cite{brattka_debrecht_pauly} by V. Brattka, A. Pauly, and the author to correspond to closed choice on the natural numbers. In the context of Wadge-like games, $j_{\Delta}$ corresponds to the ``backtrack'' game (see \cite{semmes_thesis}). A. Andretta \cite{andretta} has shown that a total function on $\baire$ is $j_{\Delta}$-realizable if and only if it is $\bdelta 2$-piecewise continuous. It follows from the Jayne-Rogers theorem (\cite{jayne_rogers}, see also \cite{ros_semmes,kacena_etal,solecki1998,kihara2013}) that a total function on $\baire$ is $j_{\Delta}$-realizable if and only if it is $\bdelta 2$-measurable. However, it should be noted that this relationship between $j_\Delta$-realizability and $\bdelta 2$-measurability does not extend to functions between arbitrary spaces, and in fact we will show in the latter half of this paper that there is no jump operator that completely captures the notion of $\bdelta 2$-measurability.

The family of jump operators $j_\alpha$ for $\alpha<\omega_1$ are further restrictions of $j_{\Delta}$, where the realizer must output with each guess an ordinal bound on the number of times it will change its guess in the future. For example, when $1\leq n <\omega$, a $j_n$-realizer can only make a maximum of $n$ guesses. A $j_\omega$-realizer must output a bound $n<\omega$ along with its first guess, and then can only modify its guess a maximum of $n$ times thereafter. This concept is closely related to the Ershov hierarchy \cite{ershov}, to the notion of ordinal mind change complexity in the field of inductive inference (see \cite{freiv:smith,luo_schulte,debrecht_etal_2, debrecht_etal_3}), and to the Hausdorff difference hierarchy \cite{kechris}. We will show later in this paper that a function between countably based $T_0$-spaces is $j_\alpha$-realizable if and only if the discontinuity level of the function (in the sense of P. Hertling \cite{hertling_thesis,hertling1996}) does not exceed $\alpha$.

%%%%%%%%%%%%%%%%%%%%%%%%%%
\subsection{Lattice structure}
%%%%%%%%%%%%%%%%%%%%%%%%%%

Jump operators are (quasi-)ordered by $j \leq k$ if and only if $j$ is $k$-realizable. We will say $j$ and $k$ are equivalent, written $j\equiv k$, if  $j\leq k$ and $k\leq j$. In the examples given previously, it is clear that $j_\alpha \leq j_\beta \leq j_\Delta \leq j_{\bsigma 2}$ when $\alpha \leq \beta < \omega_1$. It is straightforward to prove that if $j\leq k$, then $\vdash_j f$ implies $\vdash_k f$ for every function $f$ between represented spaces.

In this section we will prove that the topological jump operators form a lattice which is complete under countable (non-empty) meets and joins. The definitions and main points of the proofs in this section are mostly due to A. Pauly \cite{pauly2010}. The author is indebted to A. Pauly for pointing out that the proofs in this section only apply to topological jump operators and may fail to hold for computability theoretic jump operators in general.

In the following, given $i\in\omega$ and $\xi\in\baire$, we will write $\langle i\rangle\diamond \xi$ to denote the element of $\baire$ obtained by prepending $i$ to the beginning of $\xi$.

\begin{definition}
Let $(j_i)_{i\in\omega}$ be a countable sequence of jump operators. Define $\bigvee j_i \colon \subseteq \baire \to \baire$ and $\bigwedge j_i \colon \subseteq \baire \to \baire$ by 
\begin{enumerate}
\item
$(\bigvee j_i)(\langle i \rangle \diamond \xi) = j_i(\xi)$, where
\[dom(\bigvee j_i) = \{\langle i\rangle \diamond \xi \in \baire \,|\, \xi \in dom(j_i)\}.\]
\item
$(\bigwedge j_i)(\langle \xi_n \rangle_{n\in\omega}) = j_0(\xi_0)$, where
\[ dom(\bigwedge j_i) = \{\langle \xi_n \rangle_{n\in\omega} \in \baire \,|\, \forall i,k\colon j_i(\xi_i)=j_k(\xi_k)\}.\]
\end{enumerate}
\qed
\end{definition}

The next two theorems show that the above definitions are in fact (topological) jump operators corresponding to the supremum and infimum of $(j_i)_{i\in\omega}$. 

\begin{theorem}
$\bigvee j_i$ is a jump-operator and is the supremum of $(j_i)_{i\in\omega}$.
\end{theorem}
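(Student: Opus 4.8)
The plan is to verify three things in turn: that $\bigvee j_i$ is itself a jump operator, that $j_k \leq \bigvee j_i$ for every $k$, and that $\bigvee j_i$ lies below any upper bound of $(j_i)_{i\in\omega}$. The one structural observation used throughout is that the cylinders $[i] = \{\eta \in \baire \,|\, \eta(0) = i\}$ form a clopen partition of $\baire$; consequently any partial function that is given on each $[i]$ by a partial continuous rule is itself partial continuous. This is precisely the point at which the topological setting matters: the rules on the different pieces may be chosen completely independently (no uniformity in $i$ is demanded), whereas a computability theoretic version of the statement would require the pieces to be uniformly computable in $i$.

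First I would show $\bigvee j_i$ is a jump operator. Surjectivity is immediate from surjectivity of $j_0$: given $\zeta$, choose $\xi \in dom(j_0)$ with $j_0(\xi) = \zeta$, so $(\bigvee j_i)(\langle 0 \rangle \diamond \xi) = \zeta$. For the defining property, let $F \colon \subseteq \baire \to \baire$ be partial continuous. Since each $j_i$ is a jump operator, fix partial continuous $F_i'$ with $F \circ j_i = j_i \circ F_i'$, and put $F'(\langle i \rangle \diamond \xi) = \langle i \rangle \diamond F_i'(\xi)$, which is partial continuous by the clopen-partition remark. For any $\langle i \rangle \diamond \xi$ one computes $(\bigvee j_i)(F'(\langle i \rangle \diamond \xi)) = j_i(F_i'(\xi)) = (F \circ j_i)(\xi) = F((\bigvee j_i)(\langle i \rangle \diamond \xi))$, and the identity $F \circ j_i = j_i \circ F_i'$ forces the two sides to have the same domain on each cylinder; hence $F \circ \bigvee j_i = \bigvee j_i \circ F'$.

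Next, for $j_k \leq \bigvee j_i$ it suffices to note that the map $s_k \colon \baire \to \baire$ defined by $s_k(\xi) = \langle k \rangle \diamond \xi$ is continuous and, directly from the definition of the domain and value of $\bigvee j_i$, satisfies $(\bigvee j_i) \circ s_k = j_k$ as partial functions; thus $s_k$ is a continuous $\bigvee j_i$-realizer of $j_k$. Finally, let $k$ be any jump operator with $j_i \leq k$ for all $i$, and fix for each $i$ a partial continuous $G_i$ with $k \circ G_i = j_i$. Define $H(\langle i \rangle \diamond \xi) = G_i(\xi)$, with $dom(H) = \{\langle i \rangle \diamond \xi \,|\, \xi \in dom(G_i)\}$; it is partial continuous by the clopen-partition remark. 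Then $k \circ H$ is defined at $\langle i \rangle \diamond \xi$ exactly when $\xi \in dom(k \circ G_i) = dom(j_i)$, which is precisely $dom(\bigvee j_i)$, and there $(k \circ H)(\langle i \rangle \diamond \xi) = (k \circ G_i)(\xi) = j_i(\xi) = (\bigvee j_i)(\langle i \rangle \diamond \xi)$. Hence $k \circ H = \bigvee j_i$, so $\bigvee j_i \leq k$, and $\bigvee j_i$ is the supremum. I do not expect any genuinely hard step here; the only thing to handle carefully is the alignment of domains, and it is worth flagging that the gluing of the independently chosen witnesses $F_i'$ and $G_i$ into continuous functions is exactly what would fail in the computability theoretic setting.
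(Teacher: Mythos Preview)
Your proposal is correct and follows essentially the same argument as the paper: the same gluing $F'(\langle i\rangle\diamond\xi)=\langle i\rangle\diamond F_i'(\xi)$ for the jump-operator property, the same section $s_k(\xi)=\langle k\rangle\diamond\xi$ for $j_k\leq\bigvee j_i$, and the same gluing $H(\langle i\rangle\diamond\xi)=G_i(\xi)$ for least upper bound. Your version is slightly more thorough in that you check surjectivity explicitly, track domains, and isolate the clopen-partition point where the topological argument diverges from the computability-theoretic one; the paper leaves these implicit.
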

\begin{proof}
Assume $f\colon \subseteq \baire\to\baire$ is continuous. For $i\in \omega$ there is continuous $g_i\colon \subseteq \baire\to\baire$ such that $f\circ j_i = j_i\circ g_i$. Define $g\colon \subseteq \baire\to \baire$ as $g(\langle i \rangle \diamond \xi) = \langle i \rangle \diamond g_i(\xi)$. Clearly $g$ is continuous and
\begin{eqnarray*}
f((\bigvee j_i)(\langle i \rangle \diamond \xi)) &=& f(j_i(\xi))\\
			&=& j_i(g_i(\xi))\\
			&=& (\bigvee j_i)(\langle i \rangle \diamond g_i(\xi))\\
			&=& (\bigvee j_i)(g(\langle i \rangle \diamond \xi)),
\end{eqnarray*}
hence $f\circ \bigvee j_i = \bigvee j_i \circ g$. Therefore $\bigvee j_i$ is a jump operator.

Next, for $i\in \omega$ define $f_i(\xi)=\langle i \rangle \diamond \xi$. Then $f_i$ is continuous and $j_i = (\bigvee j_i)\circ f_i$. Therefore $j_i \leq \bigvee j_i$ for all $i\in \omega$.

Finally, assume $p\colon \baire\to\baire$ is such that $j_i = p \circ q_i$ for some continuous $q_i\colon \baire\to\baire$ (for all $i\in\omega$). Define $q\colon \baire\to\baire$ so that $q(\langle i \rangle \diamond \xi) = q_i(\xi)$. Then $q$ is continuous and 
\begin{eqnarray*}
(\bigvee j_i)(\langle i \rangle \diamond \xi) &=&j_i(\xi)\\
 &=& p(q_i(\xi))\\
 &=& p(q(\langle i \rangle \diamond \xi))
\end{eqnarray*}
hence $\bigvee j_i = p\circ q$. Therefore $\bigvee j_i\leq p$. It follows that $\bigvee j_i$ is the supremum of $(j_i)_{i\in\omega}$.
\end{proof}

\begin{theorem}
$\bigwedge j_i$ is a jump-operator and is the infimum of $(j_i)_{i\in\omega}$.
\end{theorem}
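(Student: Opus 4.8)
The plan is to follow the template of the preceding theorem, in three movements: surjectivity, the jump-operator property, and the universal property of an infimum. Surjectivity of $\bigwedge j_i$ is immediate: given $y\in\baire$, pick $\xi_i\in j_i^{-1}(y)$ for each $i$ (possible since each $j_i$ is onto); then $\langle\xi_n\rangle_{n\in\omega}\in dom(\bigwedge j_i)$ and maps to $y$. To check that $\bigwedge j_i$ is a jump operator, I would fix partial continuous $f\colon\subseteq\baire\to\baire$ and, using that each $j_i$ is a jump operator, choose partial continuous $g_i$ with $f\circ j_i=j_i\circ g_i$. Then set $g(\langle\xi_n\rangle_{n\in\omega})=\langle g_n(\xi_n)\rangle_{n\in\omega}$, with domain exactly $dom(f\circ\bigwedge j_i)$. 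The key point is that if $\langle\xi_n\rangle$ lies in this domain, with common value $y=j_i(\xi_i)\in dom(f)$, then each $\xi_i\in dom(f\circ j_i)=dom(j_i\circ g_i)$, so $g_i(\xi_i)$ is defined, lies in $dom(j_i)$, and $j_i(g_i(\xi_i))=f(j_i(\xi_i))=f(y)$, which is independent of $i$; hence $g(\langle\xi_n\rangle)\in dom(\bigwedge j_i)$ and $(\bigwedge j_i)(g(\langle\xi_n\rangle))=f(y)=f((\bigwedge j_i)(\langle\xi_n\rangle))$. Continuity of $g$ follows because on this domain each coordinate projection maps into $dom(g_n)$, where $g_n$ is continuous, and the encoding $\langle\cdots\rangle$ together with its projections is continuous. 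A domain check ($dom(\bigwedge j_i\circ g)\subseteq dom(g)=dom(f\circ\bigwedge j_i)$ trivially, the reverse inclusion from the computation above) then yields $f\circ\bigwedge j_i=\bigwedge j_i\circ g$.

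For the infimum property, $\bigwedge j_i\leq j_k$ is witnessed by the $k$-th projection $F_k(\langle\xi_n\rangle)=\xi_k$ restricted to $dom(\bigwedge j_i)$: membership in that domain forces $\xi_k\in dom(j_k)$ with $j_k(\xi_k)=(\bigwedge j_i)(\langle\xi_n\rangle)$, so $\bigwedge j_i=j_k\circ F_k$ as partial functions. Conversely, if $p\colon\subseteq\baire\to\baire$ satisfies $p\leq j_k$ for every $k$, say $p=j_k\circ q_k$ with $q_k$ continuous, I would set $q(\xi)=\langle q_n(\xi)\rangle_{n\in\omega}$ on $dom(p)$; for $\xi\in dom(p)$ each $q_k(\xi)$ is defined, lies in $dom(j_k)$, and $j_k(q_k(\xi))=p(\xi)$, so $q(\xi)\in dom(\bigwedge j_i)$ and $(\bigwedge j_i)(q(\xi))=p(\xi)$. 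This gives $p=\bigwedge j_i\circ q$, hence $p\leq\bigwedge j_i$, so $\bigwedge j_i$ is indeed the infimum of $(j_i)_{i\in\omega}$.

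The one delicate point — and the step I would be most careful to get exactly right — is the bookkeeping of partial-function domains: unlike the $\bigvee$ case, the naive "full" domains for $g$ and $q$ are in general strictly larger than needed, so one must restrict $g$ to $dom(f\circ\bigwedge j_i)$ and $q$ to $dom(p)$ in order for the required equalities of partial functions to hold on the nose rather than merely up to domain. No effectivity is used anywhere, consistent with the remark that these lattice-theoretic facts may fail for computability-theoretic jump operators.
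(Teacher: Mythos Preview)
Your proposal is correct and follows essentially the same approach as the paper: the coordinatewise construction $g(\langle\xi_n\rangle)=\langle g_n(\xi_n)\rangle$ for the jump-operator property, projections for $\bigwedge j_i\leq j_k$, and the tupling $q(\xi)=\langle q_n(\xi)\rangle$ for the universal property. You are if anything slightly more careful than the paper, explicitly checking surjectivity and tracking partial-function domains, which the paper treats more loosely.
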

\begin{proof}
Assume $f\colon \subseteq \baire\to\baire$ is continuous. For $i\in \omega$ there is continuous $g_i\colon \subseteq \baire\to\baire$ such that $f\circ j_i = j_i\circ g_i$. Define $g\colon \subseteq \baire\to \baire$ as 
\[g(\langle \xi_n \rangle_{n\in\omega})=\langle g_n(\xi_n)\rangle_{n\in\omega}.\]
Clearly $g$ is continuous and if $\langle \xi_n\rangle_{n\in\omega} \in dom(\bigwedge j_i)$ then for all $i,k\in\omega$, $j_i(\xi_i)=j_k(\xi_k)$ hence $j_i\circ g_i(\xi_i) = f\circ j_i(\xi_i) = f\circ j_k(\xi_k) = j_k\circ g_k(\xi_k)$, and it follows that $\langle g_n(\xi_n) \rangle_{n\in\omega}\in dom(\bigwedge j_i)$. So for $\langle \xi_n \rangle_{n\in\omega} \in dom(\bigwedge j_i)$ we have
\begin{eqnarray*}
f((\bigwedge j_i)(\langle \xi_n \rangle_{n\in\omega})) &=& f(j_0(\xi_0))\\
  &=& j_0(g_0(\xi_0))\\
  &=& (\bigwedge j_i)(\langle g_n(\xi_n) \rangle_{n\in\omega}),
\end{eqnarray*}
hence $f\circ \bigwedge j_i = \bigwedge j_i \circ g$. Therefore $\bigwedge j_i$ is a jump operator.

Next, define $\pi_i(\langle \xi_n \rangle_n\in\omega)=\xi_i$, which is clearly continuous. Then 
\begin{eqnarray*}
(\bigwedge j_i)(\langle \xi_n \rangle_{n\in\omega}) &=& j_0(\xi_0)\\
&=& j_i(\xi_i)\\
&=& j_i(\pi_i(\langle \xi_n\rangle_{n\in\omega})),
\end{eqnarray*}
hence $\bigwedge j_i \leq j_i$ for all $i\in\omega$.

Assume $p\colon \baire\to\baire$ is such that $p = j_i \circ q_i$ for some continuous $q_i\colon \baire\to\baire$ (for all $i\in\omega$). Define $q\colon \baire\to\baire$ so that $q(\xi) = \langle q_n(\xi) \rangle_{n\in\omega}$. Clearly $q$ is continuous. If $\xi\in dom(q)$ then $p(\xi)=j_i(q_i(\xi))$ for all $i\in \omega$, so $\langle q_n(\xi) \rangle_{n\in\omega}\in dom(\bigwedge j_i)$ and 
\begin{eqnarray*}
p(\xi) &=& j_0(q_0(\xi))\\
       &=& (\bigwedge j_i)(\langle q_n(\xi) \rangle_{n\in\omega})\\
	&=& (\bigwedge j_i)(q(\xi)),
\end{eqnarray*}
hence $p\leq \bigwedge j_i$. It follows that $\bigwedge j_i$ is the infimum of $(j_i)_{i\in\omega}$.
\end{proof}

For example, let $(j_{\alpha_i})_{i\in\omega}$ be a sequence of jump operators from Example \ref{ex:jalpha}. Then it is straightforward to verify that $\bigvee j_{\alpha_i} = j_{\bigvee \alpha_i}$ and $\bigwedge j_{\alpha_i} = j_{\bigwedge \alpha_i}$.

%%%%%%%%%%%%%%%%%%%%%%%%%%%%%%%%%%%%%%
\subsection{The ``jump'' of a representation}
%%%%%%%%%%%%%%%%%%%%%%%%%%%%%%%%%%%%%%

Let $\cal J$ be the lattice of jump operators. It is easy to show that if $j$ and $k$ are jump operators, then so is $j\circ k$. Furthermore, if $j_1 \leq j_2$, then $j_1 \circ k \leq j_2 \circ k$. Thus, every jump operator $k$ defines a monotonic function on $\cal J$, which we call the \emph{$k$-jump}, that maps $j$ to $j \circ k$. This notion of iterating ``jumps'' can be found in \cite{ziegler} and \cite{brattka_makananise} for the case of $j_{\bsigma 2}$.

A jump operator $j$ is \emph{extensive} if the identity function $id\colon\baire\to\baire$ is $j$-realizable. Currently the author is unaware of any topological jump operators that are \emph{not} extensive, but the non-extensive computability theoretic jump operators have a non-trivial structure (for example, the inverse of the Turing jump, or integral, in \cite{brattka_makananise} and \cite{brattka_debrecht_pauly} is non-extensive).

A jump operator $j$ is \emph{idempotent} if $j\circ j \equiv j$. The jump operator $j_{\Delta}$ is idempotent, but $j_{\bsigma 2}$ is not.

We will say that $j$-realizability is \emph{closed under compositions} if for every pair of $j$-realizable functions $f\colon X \to Y$ and $g\colon Y \to Z$ we have that $g\circ f$ is also $j$-realizable.

\begin{theorem}
If $j$ is an extensive jump operator, then $j$-realizability is closed under composition if and only if $j$ is idempotent.
\end{theorem}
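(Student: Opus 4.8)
The plan is to prove both directions separately, using the fact that $j$ is extensive (so $\mathit{id}$ is $j$-realizable, which gives us a continuous $G_0$ with $j \circ G_0 \vdash \mathit{id}$, i.e.\ $j \circ G_0 = \rho^{-1}\rho$ on the appropriate domain, or more concretely $\rho \circ j \circ G_0 = \rho$; for the purposes here it is cleaner to work directly with realizers on Baire space, noting that extensivity gives continuous $G_0$ with $j \circ G_0 = \mathit{id}$ on $\mathrm{dom}(G_0)$, and $G_0$ a section of... actually it gives $j \circ G_0 = \mathit{id}$ as a realizer relation). First I would record the precise unfolding of $f \vdash_j g$: $F \vdash_j f$ means $j \circ F \vdash f$, i.e.\ $f \circ \rho_X = \rho_Y \circ j \circ F$.

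For the direction ``$j$ idempotent $\Rightarrow$ $j$-realizability closed under composition'': suppose $F \vdash_j f$ and $G \vdash_j g$ with $f\colon X\to Y$, $g\colon Y\to Z$, and $F,G$ continuous. Then $j \circ F$ realizes $f$ as a map $X \to Y$, so $j\circ F$ is a (continuous) realizer of $f\colon X\to Y$ in the plain sense. I want a $j$-realizer of $g\circ f$. The natural candidate is built from $G \circ (j\circ F)$: indeed $g \circ f \circ \rho_X = g \circ \rho_Y \circ (j\circ F) = \rho_Z \circ j \circ G \circ (j \circ F)$, so $j \circ (j \circ G \circ F)$... wait, we need $j\circ G \circ j \circ F$ to have the form $j \circ (\text{something continuous})$. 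This is exactly where the jump-operator property plus idempotence enters: by the defining property of $j$ applied to the continuous map $G$, there is continuous $G'$ with $G \circ j = j \circ G'$; hence $j \circ G \circ j \circ F = j \circ j \circ G' \circ F$, and since $j \equiv j \circ j$ (idempotence) we get a continuous $H$ with $j \circ H = j\circ j$ composed appropriately — more carefully, $j\circ j \leq j$ gives continuous $E$ with $j \circ E \vdash j\circ j$ as maps of represented spaces, or at the level of Baire space $j \circ j = j \circ E$ on the relevant domain up to a realizer; combining, $g\circ f$ is realized by $j$ composed with the continuous map $E \circ G' \circ F$, so $g \circ f$ is $j$-realizable. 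I would present this as a clean chain of equalities, being careful that all the intermediate ``$j$-commutation'' maps exist and are continuous by the jump-operator axiom and by $j\circ j \equiv j$.

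For the converse, ``$j$-realizability closed under composition $\Rightarrow$ $j$ idempotent'': we always have $j \leq j\circ j$ when $j$ is extensive (compose the identity-realizer trick: $j = (j\circ j)\circ G_0$ where $j\circ G_0 = \mathit{id}$), so it suffices to show $j\circ j \leq j$, i.e.\ that $j\circ j$ is $j$-realizable. Here is the idea: regard $j\colon \baire \to \baire$ and $j\colon\baire\to\baire$ as two functions between suitable represented spaces whose composition is $j\circ j$, and arrange that each is $j$-realizable, then invoke closure under composition. Concretely, $j$ is trivially $j$-realizable as a map from the represented space $\langle \baire, j\rangle$ to $\langle\baire,\mathit{id}\rangle$ (its $j$-realizer is $\mathit{id}$, since $j\circ \mathit{id} = j$ realizes it), and also $j$ is $j$-realizable as a map $\langle\baire,\mathit{id}\rangle \to \langle\baire,\mathit{id}\rangle$? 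No — that would need $j \circ F \vdash j$ meaning $j\circ\mathit{id} = \mathit{id}\circ j\circ F$, i.e.\ $j = j\circ F$, satisfied by $F = \mathit{id}$. The cleanest setup: let $X = \langle\baire, j\circ j\rangle$, $Y = \langle\baire, j\rangle$, $Z = \langle\baire,\mathit{id}\rangle$; then $\mathit{id}\colon X\to Y$ has $\mathit{id}$ as a $j$-realizer (check: $j\circ\mathit{id} \vdash \mathit{id}_{X\to Y}$ iff $\mathit{id}\circ(j\circ j) = j\circ j\circ\mathit{id}$ — true), and $\mathit{id}\colon Y\to Z$ has $\mathit{id}$ as a $j$-realizer; composing, $\mathit{id}\colon X\to Z$ is $j$-realizable, which unfolds to: there is continuous $F$ with $j\circ F \vdash \mathit{id}_{X\to Z}$, i.e.\ $j\circ j = j\circ F$, exactly witnessing $j\circ j \leq j$. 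I expect the main obstacle to be bookkeeping: making sure the represented-space reformulation is set up so that ``closure under composition'' applies on the nose, and tracking partiality of all the maps involved; the underlying content is short once the right triangle of represented spaces is chosen.
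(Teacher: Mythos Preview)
Your proposal is correct and follows essentially the same approach as the paper. For the direction ``idempotent $\Rightarrow$ closed under composition'' you do exactly what the paper does: use the jump-operator axiom to commute $G$ past $j$ and then collapse $j\circ j$ to $j$ via idempotence. For the converse, the paper argues more directly that $j\colon\langle\baire,id\rangle\to\langle\baire,id\rangle$ is trivially $j$-realizable (by the identity), so closure under composition forces $j\circ j$ to be $j$-realizable, i.e.\ $j\circ j\leq j$; your setup with the three represented spaces $\langle\baire,j\circ j\rangle$, $\langle\baire,j\rangle$, $\langle\baire,id\rangle$ is a reformulation that unwinds to the same equation $j\circ j = j\circ F$, so the difference is cosmetic rather than substantive.
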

\begin{proof}
Assume $j$ is extensive and closed under compositions. Clearly, $j$ is $j$-realizable, so $j\circ j$ must be $j$-realizable, hence $j\circ j \leq j$. On the other hand, since $id \leq j$ it follows by the monotonicity of the $j$-jump that $j \leq j\circ j$. Therefore, $j\circ j \equiv j$.

For the converse, assume $j$ is extensive and idempotent, and assume $F\vdash_j f$ and $G\vdash_j g$ and the composition $g\circ f$ is possible. Then $j\circ F \vdash f$ and $j\circ G \vdash g$, and composition gives $j\circ G \circ j\circ F \vdash g\circ f$. Since $j$ is a jump operator, there is continuous $G'$ such that $j\circ j\circ G'\circ F \vdash g\circ f$. Now using the idempotent property of $j$ we obtain $G'\circ F \vdash_j g\circ f$. 
\end{proof}

Recall that a closure operator on a partially ordered set is a function which is monotonic, extensive, and idempotent. The above theorem can be reworded as follows.

\begin{corollary}
If $j$ is an extensive jump operator, then $j$-realizability is closed under composition if and only if the $j$-jump is a closure operator on $\cal J$.
\qed
\end{corollary}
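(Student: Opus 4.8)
The plan is to obtain the Corollary immediately from the preceding Theorem together with a purely order‑theoretic observation. The Theorem already tells us that, for an extensive jump operator $j$, closure of $j$-realizability under composition is equivalent to $j$ being idempotent, so all that remains is to show that, for extensive $j$, the $j$-jump (the map $k\mapsto k\circ j$ on $\cal J$) is a closure operator if and only if $j$ is idempotent. Recall that a closure operator is a monotonic, extensive, idempotent self-map of a poset; the $j$-jump has already been noted to be monotonic, so I would reduce the claim to two points: (i) if $j$ is extensive as a jump operator, then the $j$-jump is extensive as a self-map of $\cal J$; and (ii) the $j$-jump is idempotent on $\cal J$ if and only if $j$ is idempotent.

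For (i), I would start from the hypothesis $id\leq j$, which by definition means $id$ has a continuous $j$-realizer $F\colon\subseteq\baire\to\baire$; with the identity representation on $\baire$ this just says $j\circ F = id$, i.e.\ $F$ is a continuous section of $j$. Then for an arbitrary $k\in\cal J$ the identity $k = k\circ id = (k\circ j)\circ F$ exhibits $F$ as a continuous $(k\circ j)$-realizer of $k$, so $k\leq k\circ j$, which is precisely extensiveness of the $j$-jump. (Here one uses that $k\circ j$ is again a jump operator, which was already observed.)

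For (ii), I would first record the auxiliary fact that precomposition by a fixed $k$ is monotonic on $\cal J$: unwinding $\leq$, from $j_1\leq j_2$ we get a continuous $G$ with $j_1 = j_2\circ G$, whence $k\circ j_1 = (k\circ j_2)\circ G$ and so $k\circ j_1\leq k\circ j_2$. Given this, $j\circ j\equiv j$ yields $k\circ j\circ j\equiv k\circ j$ for every $k\in\cal J$, i.e.\ the $j$-jump is idempotent; and conversely, specializing idempotence of the $j$-jump to the element $k=id$ gives $(id\circ j)\circ j\equiv id\circ j$, that is, $j\circ j\equiv j$. Combining (i), (ii), the known monotonicity of the $j$-jump, and the Theorem then closes the argument.

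I do not anticipate a genuine obstacle: the whole proof is a matter of unwinding definitions, and the paper itself signals as much by presenting the Corollary as a ``rewording'' of the Theorem. The one step that takes a moment's thought is (i), where one must notice that the hypothesis $id\leq j$ hands us a continuous right inverse of $j$ and that postcomposing that right inverse with $k$ produces exactly the realizer needed to witness $k\leq k\circ j$; the remaining manipulations are formal.
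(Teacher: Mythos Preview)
Your proposal is correct and follows the same route the paper intends: the Corollary is presented in the paper as a mere rewording of the preceding Theorem, with no proof given at all (just a \qed), and your argument simply unpacks the order-theoretic content that makes this rewording valid. The details you supply for (i) and (ii) are accurate and fill in exactly what the paper leaves to the reader.
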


It is easy to see that if the $j$-jump is a closure operator on $\cal J$, then $j$ is the least fixed point of the $j$-jump above $id$. In particular, the $j_{\Delta}$-jump of $j_\alpha$ is equivalent to $j_{\Delta}$ for each $\alpha<\omega_1$. It turns out that $j_{\bsigma \alpha}$ is a fixed point of the $j_{\Delta}$-jump for each $\alpha <\omega_1$ because $j_{\bsigma \alpha} \circ j_{\Delta}$ is $\bsigma \alpha$-measurable.

%%%%%%%%%%%%%%%%%%%%%%%%%%
\subsection{Strong Weihrauch Degrees}
%%%%%%%%%%%%%%%%%%%%%%%%%%

In this section we will compare jump operators with the notion of strong Weihrauch reducibility (see \cite{brattka_gherardi,brattka_gherardi2,pauly2010,brattka_debrecht_pauly} for more on Weihrauch reducibility). We only consider the topological version of reducibility for the case of single valued functions.

\begin{definition}
Let $f\colon X \to Y$ and $g\colon W \to Z$ be functions between represented spaces. Define $f\leq_{sW} g$ if and only if there are continuous functions $K,H\colon\subseteq \baire\to\baire$ satisfying $K\circ G \circ H \vdash f$ whenever $G\vdash g$.
\qed
\end{definition}

\begin{theorem}
Let $f \colon X \to Y$ be a function between represented spaces, and let $j$ be a jump operator. Then $f \leq_{sW} j$ if and only if $f$ is $j$-realizable.
\end{theorem}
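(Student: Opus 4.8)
The plan is to prove both directions directly from the definitions, using the fact that $j$, being a jump operator, lets us "push" continuous functions through $j$ from the left. For the direction ($\Leftarrow$): suppose $f$ is $j$-realizable, witnessed by a continuous $F$ with $j\circ F\vdash f$. I would like to exhibit continuous $K,H$ with $K\circ G\circ H\vdash f$ whenever $G\vdash j$. The natural guess is $H=F$ and $K=\mathit{id}$, but $G\vdash j$ does not literally mean $G=j$; rather, it means $j\circ\rho_{\baire}=\rho_{\baire}\circ G$ where $\rho_{\baire}$ is the representation of $\baire$. So I first need to fix conventions: $\baire$ is represented by the identity representation $\mathit{id}\colon\baire\to\baire$, in which case $G\vdash j$ unwinds to $j=\mathit{id}\circ G=G$ — i.e. the realizers of $j$ (as a map from $\baire$ to $\baire$ with identity representations) are exactly the (partial) functions $G$ with $G=j$ on $\mathrm{dom}(j)$. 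Hence $K\circ G\circ H\vdash f$ becomes $K\circ j\circ H\vdash f$, and taking $H=F$, $K=\mathit{id}$ gives exactly $j\circ F\vdash f$. So this direction is essentially bookkeeping once the representation of $\baire$ is pinned down.

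For the direction ($\Rightarrow$): suppose $f\leq_{sW}j$, so there are continuous $K,H$ with $K\circ G\circ H\vdash f$ for every $G\vdash j$. Specializing $G=j$ (the identity realizer of $j$), we get $K\circ j\circ H\vdash f$ with $K,H$ continuous. Now I want to absorb $K$ into the realizer so as to obtain something of the form $j\circ(\text{continuous})\vdash f$. This is exactly where the jump operator property enters: since $K$ is continuous, there is a continuous $K'$ with $K\circ j=j\circ K'$. Substituting, $j\circ K'\circ H\vdash f$, and $K'\circ H$ is continuous, so $K'\circ H\vdash_j f$, i.e. $f$ is $j$-realizable.

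The main thing to be careful about — and the only place where anything nontrivial happens — is the interplay between "$G\vdash j$" and "$G=j$", which relies on fixing the identity representation on the copies of $\baire$ that are the domain and codomain of $j$; I would state this convention explicitly at the start of the section (or recall it). A minor secondary point: in the ($\Rightarrow$) direction one should note the domains match up, i.e. $\mathrm{dom}(K'\circ H)$ is a subset of $\mathrm{dom}(j)$ pulled back appropriately, but since the jump operator identity $K\circ j=j\circ K'$ is an equality of partial functions (with matching domains by the definition of jump operator), the domain conditions are automatic. I expect no real obstacle here; the theorem is a clean reformulation of "$j$-realizability" in Weihrauch terms, and the proof is short.
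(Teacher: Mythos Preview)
Your proposal is correct and follows essentially the same argument as the paper: both directions hinge on the observation that, with the identity representation on $\baire$, any realizer $G$ of $j$ satisfies $G=j$, so the strong Weihrauch condition reduces to $K\circ j\circ H\vdash f$; the forward direction then uses the jump-operator property to commute $K$ past $j$, and the backward direction takes $K=\mathit{id}$, $H=F$. Your write-up is slightly more careful about conventions and domains than the paper's own proof, but the logical content is identical.
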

\begin{proof}
Assume $f \leq_{sW} j$ and let $K$ and $H$ be the relevant continuous functions. Since $\baire$ is represented by the identity function, it follows that $K \circ j \circ H \vdash f $. Using the fact that $j$ is a jump operator, there is continuous $K' \colon \subseteq \baire\to\baire$ such that $j \circ K' \circ H \vdash f$. Therefore, the continuous function $K'\circ H$ $j$-realizes $f$.
  
For the converse, assume $F\colon\baire\to\baire$ is a continuous $j$-realizer of $f$. Then $j \circ F \vdash f$ by definition. Again, since $\baire$ is represented by the identity function we have $J \vdash j$ if and only if $J=j$. Thus, taking $K$ as the identity function and $H=F$ demonstrates that $f \leq_{sW} j$.
\end{proof}

The above theorem shows that jump operators form a subset of the strong Weihrauch degrees. However, this inclusion is strict, in the sense that there are strong Weihrauch degrees that do not correspond to any jump operator. For example, a constant function on $\baire$ is not strong Weihrauch equivalent to any jump operator because jump operators are surjective.

%%%%%%%%%%%%%%%%%%%%%%%%%%%%%%%%%%%%
\subsection{Adjoints}
%%%%%%%%%%%%%%%%%%%%%%%%%%%%%%%%%%%%

This section actually applies more to computability theoretic jump operators than topological jump operators, but the basic definitions and immediate results are the same in both cases. This section mainly consists of generalizations of results found in \cite{brattka_makananise} and \cite{brattka_debrecht_pauly}.

Let $j$ and $k$ be jump operators and let $id\colon \baire\to\baire$ be the identity function. We say that \emph{$j$ is left adjoint to $k$} or that \emph{$k$ is right adjoint to $j$}, and write $j \dashv k$, if and only if $k\circ j \leq id \leq j\circ k$. This is equivalent to stating that the $j$-jump on $\cal J$ is left adjoint to the $k$-jump, and it also implies that the associated endofunctors are adjoint.

\begin{example}[see \cite{brattka_makananise} and \cite{brattka_debrecht_pauly}]
Let $(U_n)_{n\in\omega}$ be a standard enumeration of the computably enumerable open subsets of $\baire$. Define $J\colon \baire\to\baire$ by $J(\xi)(n) = 1$ if $\xi\in U_n$ and $J(\xi)(n)=0$, otherwise. Then $J^{-1}$, the inverse of $J$, is a computability theoretic jump operator and $J^{-1} \dashv j_{\bsigma 2}$.
\qed
\end{example}

Note, however, that $J^{-1}$ is \emph{not} a topological jump operator \cite{brattka_debrecht_pauly}.

\begin{proposition}
If $j\dashv k$ then the $(j\circ k)$-jump is a closure operator. In particular, $(j\circ k)$-realizable functions are closed under composition.
\end{proposition}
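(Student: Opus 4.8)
The plan is to show that $(j\circ k)\circ(j\circ k)\equiv j\circ k$, since then the $(j\circ k)$-jump is monotonic (always true, as noted in the excerpt), extensive, and idempotent, i.e.\ a closure operator, and the closure under composition of $(j\circ k)$-realizable functions then follows from the Corollary together with the Theorem relating closure under composition to idempotence. So really I only need to verify that $j\circ k$ is extensive and idempotent.

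For extensivity, I would use that $j\dashv k$ gives $id\leq j\circ k$, which is by definition the statement that $id$ is $(j\circ k)$-realizable; so $j\circ k$ is extensive. For idempotence, the inequality $j\circ k\leq (j\circ k)\circ(j\circ k)$ follows from extensivity by monotonicity of the $(j\circ k)$-jump (composing $id\leq j\circ k$ on the right with $j\circ k$). For the reverse inequality $(j\circ k)\circ(j\circ k)\leq j\circ k$, I would use the other half of the adjunction, $k\circ j\leq id$: then composing on the left by $j$ and on the right by $k$, and using monotonicity of composition in both arguments, gives $j\circ(k\circ j)\circ k\leq j\circ id\circ k = j\circ k$, i.e.\ $(j\circ k)\circ(j\circ k)\leq j\circ k$. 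Here I am using the facts recorded earlier that composition of jump operators is again a jump operator and that $\leq$ is compatible with composition on both sides (the excerpt states $j_1\circ k\leq j_2\circ k$ when $j_1\leq j_2$; the symmetric statement $k\circ j_1\leq k\circ j_2$ follows since $k$-realizability is monotone, or can be quoted as the monotonicity of the $k$-jump composed appropriately). Combining the two inequalities gives $(j\circ k)\circ(j\circ k)\equiv j\circ k$.

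Having established that $j\circ k$ is an extensive idempotent jump operator, the Theorem on closure under composition applies directly: $(j\circ k)$-realizability is closed under composition. Equivalently, by the Corollary, the $(j\circ k)$-jump is a closure operator on $\cal J$. I would phrase the final write-up by first invoking the adjunction inequalities, then deriving extensivity and idempotence in two short displayed computations, then citing the earlier Theorem/Corollary.

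The only mild subtlety — and the step I would be most careful about — is making sure the monotonicity of composition is available on \emph{both} sides: the excerpt explicitly states only $j_1\circ k\leq j_2\circ k$ for $j_1\leq j_2$, whereas the idempotence argument also needs $k\circ j_1\leq k\circ j_2$. This is immediate (if $F$ continuously $k$-realizes... more simply, $j_1\leq j_2$ means $j_1$ is $j_2$-realizable, and applying a jump operator $k$ on the left preserves realizability because $k$ is a jump operator), but it should be stated rather than silently used. No genuine obstacle is expected beyond this bookkeeping.
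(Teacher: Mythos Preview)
Your proposal is correct and follows essentially the same approach as the paper: use $k\circ j\leq id$ to get $(j\circ k)\circ(j\circ k)\leq j\circ k$ and $id\leq j\circ k$ to get the reverse inequality, concluding that $j\circ k$ is extensive and idempotent. Your explicit attention to two-sided monotonicity of composition is a nice touch—the paper uses this silently in writing $j\circ k\circ j\circ k\leq j\circ id\circ k$.
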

\begin{proof}
This is a well known property of adjoints. Since $k\circ j \leq id$ it follows that $j\circ k\circ j\circ k\leq j\circ id \circ k\equiv j\circ k$. Furthermore, $id \leq j\circ k$ implies $j\circ k \equiv id\circ j\circ k \leq j\circ k\circ j\circ k$, and it follows that $(j\circ k)$ is idempotent. Therefore, the $(j\circ k)$-jump is a closure operator. 
\end{proof}

The \emph{low-jump-operator} is defined as ${\mathfrak{L}} = J^{-1} \circ j_{\bsigma 2}$. It is shown in \cite{brattka_debrecht_pauly} that $\mathfrak{L}$-realizability captures the notion of ``lowness'' from computability theory. It immediately follows from the above proposition that $\mathfrak{L}$-realizable functions are closed under composition.

The general theory of adjoints provides much information about $j$ and $k$ when it is known that $j\dashv k$. For example, the $j$-jump preserves joins on $\cal J$ and the $k$-jump preserves meets. Viewed as functors, $j$ preserves colimits and $k$ preserves limits. This means, in particular, that $k(X)\times k(Y)$ will be isomorphic to $k(X\times Y)$ for every pair of represented spaces $X$ and $Y$.

Although so far we have been investigating the effects of weakening the output representation, it is also interesting to investigate the effects of strengthening the input representation. Given jump operators $j$ and $k$, represented spaces $\langle X, \rho_X\rangle$ and $\langle Y, \rho_Y\rangle$, and a function $f\colon X\to Y$, we will say that a function $F\colon\subseteq \baire\to\baire$ \emph{$\langle j,k\rangle$-realizes} $f$ if and only if $f\circ\rho_X\circ j = \rho_Y\circ k \circ F$. This simply means that $F$ realizes $f$ reinterpretted as a function between $j(X)$ and $k(Y)$. We will say that a function is $\langle j,k \rangle$-realizable if and only if it has a continuous $\langle j,k\rangle$-realizer. Clearly, $j$-realizability as defined earlier corresponds to $\langle id, j\rangle$-realizability.

The following theorem shows that if $j\dashv k$, then strengthening the input representation by $j$ is equivalent to weakening the output representation by $k$. 

\begin{theorem}
If $j$ and $k$ are jump operators and $j\dashv k$, then $\langle j, id\rangle$-realizability is equivalent to $\langle id, k\rangle$-realizability.
\end{theorem}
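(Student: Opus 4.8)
The plan is to prove the two implications separately, in each case unwinding the relevant notion of realizability into an equation of partial maps on $\baire$ and then using the defining property of a jump operator---that a continuous map can be absorbed through it---together with the two inequalities $k\circ j\leq id$ and $id\leq j\circ k$ furnished by $j\dashv k$.

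For the implication from $\langle id,k\rangle$-realizability to $\langle j,id\rangle$-realizability, I would take a continuous $G$ with $f\circ\rho_X=\rho_Y\circ k\circ G$ and precompose with $j$, obtaining $f\circ\rho_X\circ j=\rho_Y\circ k\circ G\circ j$. Since $G$ is continuous and $j$ is a jump operator, there is a continuous $G'$ with $G\circ j=j\circ G'$, so the right-hand side becomes $\rho_Y\circ (k\circ j)\circ G'$. Now $k\circ j\leq id$ means $k\circ j$ is $id$-realizable by a continuous map, and (since the only $id$-realizer of $k\circ j$ is $k\circ j$ itself) this amounts to saying that $k\circ j$ is already continuous; hence $(k\circ j)\circ G'$ is continuous, and $f\circ\rho_X\circ j=\rho_Y\circ (k\circ j)\circ G'$ exhibits it as a $\langle j,id\rangle$-realizer of $f$.

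For the converse I would take a continuous $F$ with $f\circ\rho_X\circ j=\rho_Y\circ F$. From $id\leq j\circ k$ I obtain a continuous $E$ with $j\circ k\circ E=id$ (a continuous $(j\circ k)$-realizer of the identity on $\baire$). Inserting this, $f\circ\rho_X=f\circ\rho_X\circ j\circ k\circ E=\rho_Y\circ F\circ k\circ E$. Because $F$ is continuous and $k$ is a jump operator, there is a continuous $F'$ with $F\circ k=k\circ F'$, so $f\circ\rho_X=\rho_Y\circ k\circ (F'\circ E)$, i.e.\ the continuous map $F'\circ E$ is a $\langle id,k\rangle$-realizer of $f$.

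I do not expect a genuinely hard step here; the only real danger is bookkeeping. The jump-operator axiom lets one rewrite $F\circ j$ as $j\circ F'$---absorbing a continuous map through $j$ \emph{from the right}, and not the reverse---so one must be careful on which side each composition is performed and, correspondingly, which half of the adjunction $j\dashv k$ to invoke and in which direction. I would also keep a light touch with domains: the displayed identities are really identities of the induced realizers on the relevant domains, which is exactly what the definition of $\langle j,k\rangle$-realizability requires, so I would argue at that level rather than insist on equality of partial functions throughout.
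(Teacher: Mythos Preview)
Your argument is correct and matches the paper's proof. The paper spells out only the direction $\langle j,id\rangle\Rightarrow\langle id,k\rangle$ (using the jump property of $k$ to pass $F_j$ through, then a continuous section $I$ witnessing $id\leq j\circ k$) and declares the other direction ``similar''; your two halves are exactly these arguments, with only a harmless reordering of when the section is inserted versus when the jump property is applied.
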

\begin{proof}
Assume $j\dashv k$ and that $f\colon X\to Y$ is $\langle j,id\rangle$-realizable. Let $F_j$ be any continuous $\langle j,id\rangle$-realizer for $f$. Since $k$ is a jump operator there is a partial continuous $F'_j$ that $\langle k, k \rangle$-realizes $F_j$, hence $F'_j$ is a $\langle j\circ k, k\rangle$-realizer of $f$. If we let $I$ be a continuous function reducing $id$ to $j\circ k$, then $F'_j\circ I$ is a continuous $\langle id, k\rangle$-realizer for $f$. Therefore, $f$ is $\langle id, k\rangle$-realizable.

Proving that $\langle id, k\rangle$-realizability implies $\langle j, id \rangle$-realizability is done similarly.
\end{proof}

Finally, the following proposition shows that it is easy to create new pairs of adjoint jump operators from a given pair of adjoint operators. We leave the proof as an easy exercise.

\begin{proposition}
If $j\dashv k$, then $k\circ k\circ j \circ j \leq k\circ j \leq id \leq j\circ k \leq j\circ j\circ k\circ k$. In particular, we have $j\circ j \dashv k\circ k$.
\qed
\end{proposition}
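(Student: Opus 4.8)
The plan is to reduce the statement to two basic facts: that $\leq$ is transitive (it is a quasi-order) and that composition of jump operators is monotone \emph{separately in each argument}. The excerpt already records right monotonicity, namely that $j_1 \leq j_2$ implies $j_1 \circ k \leq j_2 \circ k$. First I would note the left-hand version as well: if $j_1 \leq j_2$ then, since $\baire$ carries the identity representation, there is a continuous $F$ with $j_1 = j_2 \circ F$, whence $k\circ j_1 = (k\circ j_2)\circ F$ with $F$ continuous, so $k\circ j_1 \leq k \circ j_2$. Combining the two and using associativity of composition, for any jump operators $a\leq b$ and any jump operators $c,d$ we obtain $c\circ a \circ d \leq c \circ b \circ d$. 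I would also recall from the excerpt that a composite of jump operators is a jump operator, so that $k\circ k$, $j\circ j$, $k\circ k\circ j\circ j$, and $j\circ j\circ k\circ k$ are all legitimate objects of $\cal J$.

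Next I would unwind the hypothesis $j\dashv k$, which by definition is $k\circ j \leq id \leq j\circ k$, and substitute these two inequalities into the appropriate middle positions of the desired chain. For the left end,
\[ k\circ k\circ j\circ j \;=\; k\circ(k\circ j)\circ j \;\leq\; k\circ id\circ j \;=\; k\circ j, \]
using two-sided monotonicity with $k\circ j\leq id$. The two middle inequalities $k\circ j\leq id\leq j\circ k$ are exactly the hypothesis. For the right end,
\[ j\circ k \;=\; j\circ id\circ k \;\leq\; j\circ(j\circ k)\circ k \;=\; j\circ j\circ k\circ k, \]
now using $id\leq j\circ k$. This gives the full four-term chain $k\circ k\circ j\circ j \leq k\circ j \leq id \leq j\circ k \leq j\circ j\circ k\circ k$.

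Finally, applying transitivity of $\leq$ to the ends of this chain yields $(k\circ k)\circ(j\circ j)\leq id$ and $id\leq(j\circ j)\circ(k\circ k)$ (regrouping by associativity), which is precisely the assertion that $j\circ j$ is left adjoint to $k\circ k$. I do not expect any genuine obstacle here: the only point requiring a small argument beyond what the excerpt states is left monotonicity of composition, and once that is in place the remainder is purely formal manipulation. Conceptually this is simply the standard categorical fact that a composite of two adjunctions is again an adjunction, here applied to the ``jump'' endofunctions on the lattice $\cal J$.
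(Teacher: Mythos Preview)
Your argument is correct. The paper actually leaves this proposition as an easy exercise and gives no proof, so there is nothing to compare against; your verification via two-sided monotonicity of composition on $\cal J$ together with the adjunction inequalities $k\circ j\leq id\leq j\circ k$ is precisely the routine check intended.
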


%%%%%%%%%%%%%%%%%%%%%%%%%%%%%%%%%%%%
\subsection{Additional properties}
%%%%%%%%%%%%%%%%%%%%%%%%%%%%%%%%%%%%

In our final section on the general theory of jump operators, we would like to emphasize how they can contribute to the development of a categorical framework for descriptive set theory. The observations in this section are closely related to recent work initiated by A. Pauly on synthetic descriptive set theory \cite{pauly2012talk, pauly_debrecht2013}. 

Let $\cal S=\{\bot,\top\}$ be the Sierpinski space and let ${\bf 2} = \{0,1\}$ be the discrete two point space. It is well known that there is a bijection between the open (resp., clopen) subsets of a topological space $X$ and the continuous functions from $X$ to $\cal S$ (resp., $\bf 2$). In the same manner, there is an obvious bijection between $\bsigma 2(X)$ and the set of $j_{\bsigma 2}$-realizable functions $\chi \colon X \to \cal S$. Furthermore, $\bdelta 2(X)$ is in bijective correspondence with the set of $j_{\bsigma 2}$-realizable functions $\chi \colon X \to {\bf 2}$.

In general, given an arbitrary jump operator $j$ and a represented space $X$, we can define ${\bf \Sigma}_j(X)$ to be the set of $j$-realizable functions from $X$ into $\cal S$, and define ${\bf \Delta}_j(X)$ to be the set of $j$-realizable functions from $X$ into $\bf 2$. Thus, each jump operator $j$ determines a ``$j$-decideable'' class ${\bf \Delta}_j(X)$ of subsets of $X$ and a ``$j$-semi-decideable'' class ${\bf \Sigma}_j(X)$.\footnote{Note that ${\bf \Delta}_j$ and ${\bf \Sigma}_j$ will completely coincide for some jump operators, such as $j_\Delta$.}

It is well known that the category of represented spaces and continuously realizable (total) functions is cartesian closed (see \cite{bauer}, for example). Given a represented space $Y$ and a jump operator $j$, recall that $j(Y)$ denotes the represented space obtained by composing the representation with $j$ (this is the image of $Y$ under the endofunctor determined by $j$). Then for any pair of represented spaces $X$ and $Y$, the exponential object $j(Y)^X$ is the natural candidate for the represented space of $j$-realizable functions from $X$ to $Y$. In particular, $j(\cal S)^X$ corresponds to ${\bf \Sigma}_j(X)$ and $j({\bf 2})^X$ corresponds to ${\bf \Delta}_j(X)$.

We can therefore define notions such as ``$\bsigma 2$-set'' on an arbitrary represented space $X$, and we can interpret the set of $\bsigma 2$-sets as a new represented space. This can be done even when it is impossible to interpret $X$ as a topological space in any natural way.

What kind of a space is $\bsigma 2(\bsigma 2(X))$? Note that $j_{\bsigma 2}(\cal S)$ is isomorphic to the Sierpinski space with the \emph{total} representation $\rho\colon\baire\to\cal S$ sending $\xi\in\baire$ to $\top$ if and only if $(\exists n)(\forall m)[\xi(\langle n,m\rangle) = 1]$. Thus, $\bsigma 2(X)$ represents in a sense the family of ${\Sigma}^0_2$-predicates on $X$, and $\bsigma 2(\bsigma 2(X))$ is the second-order object corresponding to the family of ${\Sigma}^0_2$-predicates on the ${\Sigma}^0_2$-predicates on $X$. This connection between $\bsigma n(X)$ and ${\Sigma}^0_2$-predicates easily extends to $n>2$. It is the topic of future research to determine what kind of general ``topological'' information can be extracted from spaces like $\bsigma 2(\bsigma 2(X))$.

%%%%%%%%%%%%%%%%%%%%%
\section{Levels of discontinuity}
%%%%%%%%%%%%%%%%%%%%%

The next part of this paper will be dedicated to characterizing $j_{\Delta}$- and $j_{\alpha}$-realizability ($1\leq \alpha <\omega_1$) for functions between arbitrary countably based $T_0$-spaces.  

A characterization of $j_\Delta$-realizability for functions on $\baire$ has already been given by A. Andretta \cite{andretta}. In addition, L. Motto Ros \cite{ros_thesis} has independently investigated a notion related to $j_\alpha$-realizability on metric spaces. However, the extension of the theory to arbitrary countably based $T_0$-spaces that we provide here appears to be new.

In the following sections, we will assume that all represented spaces are countably based $T_0$-topological spaces with admissible representations. Recall from \cite{schroder, weihrauch} that a representation $\rho\colon\subseteq \baire \to X$ to a topological space $X$ is \emph{admissible} if $\rho$ is continuous and for any continuous $f\colon\subseteq \baire\to X$ there is continuous $F\colon \baire \to \baire$ such that $f=\rho\circ F$. It is well known that a function $f\colon X\to Y$ between admissibly represented spaces is continuously realizable if and only if it is continuous.

\subsection{Characterization of $j_\Delta$-realizability}

A total function $f\colon X\to Y$ is \emph{$\bdelta 2$-piecewise continuous} if and only if there is a family $\{A_i\}_{i\in\omega}$ of sets in $\bdelta 2(X)$ such that $X=\bigcup_{i\in\omega}A_i$ and $f|_{A_i}\colon A_i \to Y$, the restriction of $f$ to $A_i$, is continuous for all $i\in\omega$. 

Let $\omega_{\infty}$ be the one point compactification of the natural numbers, with $\infty$ the point at infinity. Recall that a function $\xi \colon \omega_\infty \to X$ is continuous if and only if the sequence $(\xi(i))_{i\in\omega}$ converges to $\xi(\infty)$ in $X$. Given a continuous function $\xi \colon \omega_\infty \to X$ and $S\subseteq X$, we say that \emph{$\xi$ is eventually in $S$} if and only if $\xi(\infty)\in S$ and $\xi(m)\in S$ for all but finitely many $m\in\omega$. We will say that \emph{$\xi$ is eventually equal to $x$} for some $x\in X$ if $\xi$ is eventually in the singleton set $\{x\}$, and in this case we will also say that \emph{$\xi$ is eventually constant}.

Assuming, as we do, that $X$ and $Y$ are countably based,  a function $f\colon X\to Y$ is $\bdelta 2$-piecewise continuous if and only if there is a $\bdelta 2$-measurable function $\iota\colon X\to \omega$ such that for any continuous function $\xi \colon \omega_\infty \to X$, if $\iota\circ \xi$ is eventually constant then $f\circ \xi$ is continuous. Converting from the previous definition to this definition only requires the equivalence between continuity and sequential continuity for countably based spaces, and the generalized $\bsigma 2$-reduction principle which allows us to convert a $\bsigma 2$-covering into a $\bsigma 2$-partitioning. We will call the function $\iota\colon X\to \omega$ above a \emph{${\bdelta 2}$-indexing function} for $f$. For example, the function $\iota_\Delta\colon dom(j_\Delta)\to \omega$ that maps each $\langle \xi_n\rangle_{n\in\omega}\in dom(j_\Delta)$ to the least $n\in\omega$ satisfying $(\forall m\geq n)[\xi_m = \xi_n]$ is a ${\bdelta 2}$-indexing function for $j_{\Delta}$.

The next theorem generalizes a result by A. Andretta \cite{andretta}.

\begin{theorem}\label{thrm:jdelta_char}
Let $f\colon X\to Y$ be a function between admissibly represented countably based $T_0$-spaces. Then $f$ is $j_\Delta$-realizable if and only if $f$ is $\bdelta 2$-piecewise continuous.
\end{theorem}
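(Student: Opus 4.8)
The plan is to prove both directions separately, using the reformulation of $\bdelta 2$-piecewise continuity in terms of a $\bdelta 2$-indexing function $\iota$ that was just introduced, together with the admissibility of the representations and the explicit $\bdelta 2$-indexing function $\iota_\Delta$ for $j_\Delta$.

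\textbf{($\Leftarrow$) Suppose $f$ is $\bdelta 2$-piecewise continuous.} Let $\iota\colon X\to\omega$ be a $\bdelta 2$-indexing function for $f$. First I would observe that $\iota$, being $\bdelta 2$-measurable into $\omega$, is $j_\Delta$-realizable: this is essentially the statement (cited in the discussion of Example~\ref{ex:jdelta}) that $j_\Delta$ corresponds to closed choice on $\omega$ / finite-revising computation, and in particular a $\bdelta 2$-measurable function with discrete countable codomain admits a realizer that outputs a sequence of natural-number guesses stabilizing at $\iota(x)$. So fix a continuous $G$ with $j_\Delta\circ G\vdash\iota$. The idea is now to build a $j_\Delta$-realizer $F$ for $f$ that, on input $p$ (a name for $x=\rho_X(p)$), simultaneously runs $G$ to produce the stream of guesses $g_0,g_1,\dots$ for $\iota(x)$, and runs a continuous realizer for $f$ restricted to the relevant piece. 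Concretely: whenever the current stabilized-looking guess is $n$, we simulate a continuous realizer $H_n$ of $f|_{A_n}$ (where $A_n=\iota^{-1}(n)\in\bdelta 2(X)$) on $p$; each time the $G$-guess changes we restart with the new $H_{n'}$. The output stream of $F$ thus changes only finitely often — because $G$'s guess eventually stabilizes at $\iota(x)$, after which $H_{\iota(x)}(p)$ produces an eventually stable name — and converges to a name for $f(x)$. I would phrase this using the sequential criterion: if $\xi\colon\omega_\infty\to X$ is any continuous curve along which $\iota\circ\xi$ is eventually constant, then $f\circ\xi$ is continuous, which is exactly what guarantees the composed realizer behaves correctly along converging inputs; combined with admissibility and the fact that $dom(j_\Delta)$ membership is about eventual stabilization, one checks $F\in dom$ and $j_\Delta\circ F\vdash f$.

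\textbf{($\Rightarrow$) Suppose $f$ is $j_\Delta$-realizable.} Let $F$ be a continuous $j_\Delta$-realizer, so $j_\Delta\circ F\vdash f$, i.e.\ $f\circ\rho_X=\rho_Y\circ j_\Delta\circ F$. The natural candidate for a $\bdelta 2$-indexing function is $\iota=\iota_\Delta\circ F\circ\rho_X^{-1}$ — that is, $\iota(x)$ is the stabilization point of the guess-sequence $F(p)$ for any name $p$ of $x$. Two things need checking. First, this is well-defined and $\bdelta 2$-measurable: since $\iota_\Delta$ is $\bdelta 2$-measurable on $dom(j_\Delta)$ and $F$ is continuous, the composite $\iota_\Delta\circ F$ is a $\bdelta 2$-measurable function on names, and by admissibility of $\rho_X$ (so that continuous-on-names-and-constant-on-fibers descends) it factors through a $\bdelta 2$-measurable $\iota\colon X\to\omega$; here I would use the generalized $\bsigma 2$-reduction/partition principle to handle the passage from names to points. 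Second, $\iota$ is a $\bdelta 2$-indexing function for $f$: given continuous $\xi\colon\omega_\infty\to X$ with $\iota\circ\xi$ eventually constant, lift $\xi$ (using admissibility) to a continuous curve $\tilde\xi\colon\omega_\infty\to\baire$ of names, so that $F\circ\tilde\xi$ is a continuous curve in $dom(j_\Delta)$ whose $\iota_\Delta$-value is eventually constant; along such a curve the map $j_\Delta$ is continuous (the stabilized coordinate is the limit and it is reached uniformly), so $\rho_Y\circ j_\Delta\circ F\circ\tilde\xi=f\circ\xi$ is continuous. Hence $f$ is $\bdelta 2$-piecewise continuous.

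\textbf{Main obstacle.} The routine parts are the composition-of-realizers bookkeeping; the genuinely delicate point is the descent from names to points in both directions — establishing that the $\bdelta 2$-measurable, fiber-respecting function on $\baire$ genuinely induces a $\bdelta 2$-measurable function on $X$ (not merely on the represented space), and dually that along an admissible lift the operator $j_\Delta$ is sequentially continuous precisely when the $\iota_\Delta$-index eventually stabilizes. This is where admissibility of the representations and Schröder's characterization of continuity, together with Selivanov's generalized reduction principle for $\bsigma 2$-sets, must be invoked carefully; I expect the bulk of the real proof to be spent verifying that $\iota$ is well-defined and $\bdelta 2$-measurable as a map on the abstract space, rather than on constructing the realizers.
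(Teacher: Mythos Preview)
Your backward direction is close in spirit to the paper's: the paper also pulls the indexing function back along $\rho_X$, decomposes each $\iota'^{-1}(n)$ into closed pieces, and then describes a finite-mind-change algorithm that tests membership in the closed pieces in turn and runs the corresponding continuous realizer. Your version hides this work inside the claim that $\iota$ itself is $j_\Delta$-realizable, which is fine provided you justify that claim directly (it is the special case of the theorem with discrete codomain, so you should not simply cite the theorem).

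The forward direction, however, has a real gap at exactly the point you flag as the ``main obstacle''. The map $\iota_\Delta\circ F$ is \emph{not} constant on the fibers of $\rho_X$: two different names $p,p'$ for the same $x$ will in general produce guess-sequences $F(p),F(p')$ that stabilize at different stages, even though their limits are both $\rho_Y$-names of $f(x)$. So there is no function $\iota$ on $X$ with $\iota\circ\rho_X=\iota_\Delta\circ F$, and admissibility (which concerns lifting \emph{continuous} maps, not descending discontinuous ones) does nothing to repair this. The $\bsigma 2$-reduction principle is likewise irrelevant here; it lets you turn a $\bsigma 2$-cover into a $\bsigma 2$-partition, but it does not manufacture fiber-constancy.

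The paper's argument is genuinely different. It first arranges that $\rho_X$ is an \emph{open} map with \emph{Polish fibers}. Writing $\iota'^{-1}(n)=\bigcup_i A^n_i$ with each $A^n_i$ closed, it applies the Baire category theorem \emph{inside the fiber} $\rho_X^{-1}(x)$ to find some $A^n_i$ with nonempty interior there, i.e.\ a basic open $B_k$ meeting the fiber and satisfying $B_k\cap\rho_X^{-1}(x)\subseteq A^n_i$. One then sets $\iota(x)=\langle k,n,i\rangle$ for the least such triple. The point is that the condition ``$x\in\rho_X(B_k)\setminus\rho_X(B_k\setminus A^n_i)$'' is a difference of \emph{open} subsets of $X$ (using openness of $\rho_X$), so $\iota$ is visibly $\bdelta 2$-measurable on $X$ itself; and the verification that $\iota$ is a $\bdelta 2$-indexing function goes by lifting a convergent sequence and observing that the lift is eventually trapped in $B_k\cap A^n_i$. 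In short: the descent is achieved not by quotienting a fiber-constant function, but by a Baire-category argument on each fiber combined with pushing open sets forward along an open representation.
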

\begin{proof}
Let $\rho_X$ be the admissible representation for $X$ and $\rho_Y$ the admissible representation for $Y$. We can assume without loss of generality that $\rho_X$ is an open map and has Polish fibers (i.e. $\rho_X^{-1}(x)$ is Polish for each $x\in X$), and similarly for $\rho_Y$.

Assume $F\colon\subseteq \baire\to\baire$ $j_\Delta$-realizes $f$. Then $\iota' = \iota_\Delta \circ F$ is a $\bdelta 2$-indexing function for $f\circ \rho_X = \rho_Y \circ j_\Delta \circ F$. Since $\iota'$ is $\bdelta 2$-measurable, we can write $\iota'^{-1}(n) = \bigcup_{i\in\omega} A^n_i$ for suitably chosen closed sets $A^n_i$. Let $\{B_i\}_{i \in\omega}$ be a countable basis for $\baire$, and define $U^k_{n,i} = \rho_X(B_k)$ and $V^k_{n,i} = \rho_X(B_k\setminus A^n_i)$. Note that $U^k_{n,i}$ and $V^k_{n,i}$ are open subsets of $X$ by our assumption that $\rho_X$ is an open map. 

We first show that each $x\in X$ is in $U^k_{n,i}\setminus V^k_{n,i}$ for some choice of $k,n,i\in\omega$. Since $\rho_X^{-1}(x) \subseteq \bigcup_{n,i\in\omega} A^n_i$, the Baire category theorem implies some $A^n_i$ must have non-empty interior in $\rho_X^{-1}(x)$. Thus there is some $k\in\omega$ such that $B_k\cap \rho_X^{-1}(x)\not=\emptyset$ and $B_k\cap \rho_X^{-1}(x) \subseteq A^n_i \cap \rho_X^{-1}(x)$. It follows that $x\in U^k_{n,i}\setminus V^k_{n,i}$.

Let $\langle \cdot, \cdot,\cdot \rangle\colon \omega^3 \to \omega$ be a bijection, and define $\iota \colon X\to \omega$ so that $\iota(x) = \langle k, n, i \rangle$, where $\langle k,n,i\rangle$ is the least number satisfying $x\in U^k_{n,i}\setminus V^k_{n,i}$. It is immediate that $\iota$ is $\bdelta 2$-measurable.

Let $\xi\colon \omega_\infty \to X$ be a continuous function such that $\iota\circ \xi$ is eventually constant. The admissibility of $\rho_X$ implies there is continuous $\xi'\colon \omega_\infty\to\baire$ such that $\xi = \rho_X\circ \xi'$. Assume $(\iota\circ\xi)(\infty) = \langle k,n,i\rangle$. Then $\xi$ is eventually in $\rho_X(B_k)\setminus \rho_X(B_k\setminus A^n_i)$, hence $\xi'$ is eventually in $B_k \cap A^n_i$, and it follows that $\iota' \circ \xi'$ is eventually equal to $n$. Since $\iota'$ is a $\bdelta 2$-indexing function for $f\circ \rho_X$, it follows that $f\circ \xi = f\circ \rho_X\circ \xi'$ is continuous. Therefore, $\iota$ is a $\bdelta 2$-indexing function for $f$, and we have proven that $f$ is $\bdelta 2$-piecewise continuous.

For the converse, let $\iota\colon X\to \omega$ be a $\bdelta 2$-indexing function for $f$. Then $\iota' = \iota \circ \rho_X$ is a $\bdelta 2$-indexing function for $f\circ \rho_X$. We can write $\iota'^{-1}(n) = \bigcup_{i\in\omega} A^n_i$ for suitably chosen closed sets $A^n_i$, and we have that $f\circ\rho_X$ restricted to $A^n_i$ is continuous. By the admissibility of $\rho_Y$, there is continuous $F^n_i\colon \subseteq \baire\to\baire$ that realizes the restriction of $f\circ \rho_X$ to $A_n^i$. By relabeling, we can assume that $\{A_i\}_{i\in\omega}$ is a family of closed sets covering the domain of $f\circ\rho_X$, and $F_i$ is a continuous realizer for the restriction of $f\circ\rho_X$ to $A_i$.

The most intuitive way to explain how to ``glue'' together the continuous realizers $F_i$ into a single $j_\Delta$-realizer $F$, is to define an algorithm for a Type Two Turing Machine that computes $F$ (possibly with access to some oracle). This description will also help clarify the connections between limit computing with finite mind changes and the $j_\Delta$ jump operators. The reader should consult \cite{weihrauch} for more on Type Two Turing Machines, and \cite{ziegler} for an intuitive description of computing with finite mind changes.

The realizer $F$ first initializes a pointer $p := 0$, and begins reading in the input $\xi\in\baire$. While reading in the input, $F$ attempts to write to its output tape (an encoding of) an infinite sequence of copies of the output of $F_p(\xi)$. In parallel, $F$ will try to determine whether or not $\xi$ really is in $A_p$. If $\xi$ is \emph{not} in $A_p$, then this will be observed after reading in some finite prefix of $\xi$ because $A_p$ is a closed set. In such a case, $F$ will increment the pointer $p:=p+1$, and then resume outputting copies of the output of $F_p(\xi)$ and testing whether $x\in A_p$ for the updated value of the pointer $p$. 

When $p$ is incremented, it is possible that $F$ has already written some finite prefixes of a finite number of elements of $\baire$ to the output tape. After incrementing the pointer, $F$ will consider these initial guesses to be invalid, and will complete the prefixes it has already written by extending them with infinitely many zeros. This guarantees that $F$ will produce a valid encoding of an infinite sequence of elements of $\baire$ as output.     

Since $\{A_i\}_{i\in\omega}$ covers the domain of $f\circ\rho_X$, after a finite number of ``mind changes'' the pointer $p$ will reach a value such that $\xi\in A_p$, and the pointer will never be modified again afterwards. Since $F_p$ realizes the restriction of $f\circ\rho_X$ to $A_p$, we see that the output of $F$ converges after a finite number of modifications to the desired output.
\end{proof}

%%%%%%%%%%%%%%%%%%%%%%%%%%
\subsection{Characterization of $j_\alpha$-realizability}
%%%%%%%%%%%%%%%%%%%%%%%%%%

In this section we will characterize $j_\alpha$-realizability in terms of a hierarchy of discontinuity levels introduced by P. Hertling \cite{hertling_thesis, hertling1996}.

Recall that a function $f\colon X\to Y$ is continuous at $x\in X$ iff for any neighborhood $V$ of $f(x)$ there is an open neighborhood $U$ of $x$ such that $f(U)\subseteq V$. If $f$ is not continuous at $x$ then $f$ is \emph{discontinuous} at $x$.

\begin{definition}[P. Hertling \cite{hertling_thesis, hertling1996}]\label{def:levelofdiscont}
Let $cl(\cdot)$ be the closure operator on $X$ and let $f\colon X\to Y$ be a function. For each ordinal $\alpha$, define $\cal L_{\alpha}(f)$ recursively as follows:
\begin{enumerate}
\item
$\cal L_0(f)=X$
\item
$\cal L_{\alpha+1}(f)=cl(\{ x\in \cal L_{\alpha}(f)\,|\, f|_{\cal L_{\alpha}(f)} \mbox{ is discontinuous at } x\})$
\item
If $\alpha$ is a limit ordinal, then $\cal L_{\alpha}(f)=\bigcap_{\beta<\alpha}\cal L_{\beta}(f)$.
\end{enumerate}
The \emph{level of $f$}, denoted $Lev(f)$, is defined as $Lev(f)=\min\{\alpha\,|\, \cal L_{\alpha}(f)=\emptyset\}$ if there exists $\alpha$ such that $\cal L_{\alpha}(f)=\emptyset$, and $Lev(f)=\infty$, otherwise.
\qed
\end{definition}

Note that, assuming that $X$ is countably based, there is some $\alpha<\omega_1$ such that $\cal L_{\alpha}(f) = \cal L_{\alpha+1}(f)$. This is because we cannot have a strictly decreasing transfinite sequence of closed sets in $X$ with non-countable order type (see Theorem 6.9 in \cite{kechris}). In particular, if $Lev(f)\not=\infty$ then $Lev(f)<\omega_1$ when the domain is countably based.

The next definition will provide a convenient characterization of $Lev(\cdot)$ in terms of ``piecewise continuity''.

\begin{definition}\label{def:dalphapiecewise}
For each ordinal $\alpha$ ($1\leq \alpha <\omega_1$), a total function $f\colon X\to Y$ is \emph{$\bdiff \alpha$-piecewise continuous} if and only if there is a family $\{U_{\beta}\}_{\beta< \alpha}$ of open subsets of $X$ such that $X=\bigcup_{\beta< \alpha}\bdiff \alpha(U_\beta)$ and $f|_{\bdiff \alpha(U_\beta)}\colon \bdiff \alpha(U_\beta) \to Y$ is continuous for all $\beta<\alpha$, where we define $\bdiff \alpha(U_\beta)=U_{\beta}\setminus \bigcup_{\gamma<\beta}U_{\gamma}$.
\qed
\end{definition}

The following theorem shows that Definitions \ref{def:levelofdiscont} and \ref{def:dalphapiecewise} describe equivalent hierarchies of discontinuity.

\begin{theorem}\label{thrm:jalpha_char}
Let $X$ and $Y$ be non-empty countably based $T_0$ spaces, and $f\colon X\to Y$ a function. Then $Lev(f)=\alpha$ ($\alpha\not=\infty$) if and only if $f$ is $\bdiff {\alpha}$-piecewise continuous and $f$ is not $\bdiff {\beta}$-piecewise continuous for any $\beta<\alpha$.
\end{theorem}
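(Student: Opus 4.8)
The plan is to prove the equivalence by establishing, for each $\alpha$, the single statement: \emph{$f$ is $\bdiff\alpha$-piecewise continuous if and only if $\cal L_\alpha(f)=\emptyset$}, i.e. if and only if $Lev(f)\leq\alpha$. Granting this, the theorem follows immediately, since $Lev(f)=\alpha$ exactly means $\cal L_\alpha(f)=\emptyset$ but $\cal L_\beta(f)\neq\emptyset$ for all $\beta<\alpha$, which under the equivalence translates to $f$ being $\bdiff\alpha$-piecewise continuous but not $\bdiff\beta$-piecewise continuous for any $\beta<\alpha$. (One small wrinkle: $\bdiff\alpha$-piecewise continuity is only defined for $\alpha\geq 1$, while $\cal L_0(f)=\emptyset$ can happen only when $X=\emptyset$, which the hypothesis excludes; so the induction genuinely starts at $\alpha=1$, where $\bdiff 1(U_0)=U_0$ and the condition is just that $f$ is continuous on some open cover by a single set, i.e. $f$ continuous, i.e. $\cal L_1(f)=cl(\{x : f\text{ discontinuous at }x\})=\emptyset$.)

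For the forward direction I would argue by transfinite induction on $\alpha$. Suppose $f$ is $\bdiff\alpha$-piecewise continuous, witnessed by open sets $\{U_\beta\}_{\beta<\alpha}$, and write $D_\beta=\bdiff\alpha(U_\beta)=U_\beta\setminus\bigcup_{\gamma<\beta}U_\gamma$. The key observation is that for $\beta<\alpha$, the set $\bigcup_{\gamma\leq\beta}U_\gamma$ is open, so $X\setminus\bigcup_{\gamma\leq\beta}U_\gamma$ is closed, and I expect to show that $\cal L_{1+\beta}(f)\subseteq X\setminus\bigcup_{\gamma\leq\beta}U_\gamma$ (or a closely related indexing) by induction on $\beta$: once we have removed $\bigcup_{\gamma\leq\beta}U_\gamma$, the restriction of $f$ to the remaining closed set is itself $\bdiff{\alpha'}$-piecewise continuous for the tail indexing by $\{U_\gamma\}_{\beta<\gamma<\alpha}$, and crucially $f$ restricted to $U_{\beta+1}$ relative to that closed set coincides with $f|_{D_{\beta+1}}$ on an open (in the closed subspace) set on which it is continuous. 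Thus each step of the $\cal L$-derivative peels off (at least) the next open piece, so after $\alpha$ steps nothing remains: $\cal L_\alpha(f)=\emptyset$. The limit-ordinal bookkeeping uses clause (3) of Definition \ref{def:levelofdiscont} together with the fact that $\{X\setminus\bigcup_{\gamma<\lambda}U_\gamma\}$ is the intersection of the earlier closed sets.

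For the converse — \emph{$\cal L_\alpha(f)=\emptyset$ implies $f$ is $\bdiff\alpha$-piecewise continuous} — I would again induct on $\alpha$, and this is where I expect the main difficulty. The natural move is to set $C_\beta=\cal L_\beta(f)$, a decreasing transfinite sequence of closed sets with $C_\alpha=\emptyset$, and to take $U_\beta$ to be an open set with $U_\beta\cap C_\beta = C_\beta\setminus C_{\beta+1}$ — but such a $U_\beta$ need not exist in a non-metrizable $T_0$ space, and $C_\beta\setminus C_{\beta+1}$ is only a relatively open (in fact, by definition of $\cal L_{\beta+1}$ as a closure, a relatively dense open) subset of $C_\beta$, not obviously open in $X$. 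The fix I would pursue: by the definition of $\cal L_{\beta+1}(f)$, the restriction $f|_{C_\beta}$ is continuous precisely on the relatively open set $C_\beta\setminus\{x : f|_{C_\beta}\text{ discont. at }x\}$, which \emph{contains} $C_\beta\setminus C_{\beta+1}$; and because $X$ is countably based, every relatively open subset of $C_\beta$ has the form $C_\beta\cap W$ for some open $W\subseteq X$. So I would define $U_\beta$ to be (a careful choice of) such a witnessing open set $W$, chosen so that $U_\beta\supseteq C_\beta\setminus C_{\beta+1}$ while $U_\beta\cap C_{\beta+1}$ is controlled, and then verify (i) that $\{U_\beta\}_{\beta<\alpha}$ covers $X$ via the $\bdiff\alpha$-partitioning — because every $x$ lies in $C_\beta\setminus C_{\beta+1}$ for a unique $\beta<\alpha$ (using $C_\alpha=\emptyset$ and transfinite well-ordering of the "exit stage"), and one shows $x\in\bdiff\alpha(U_\beta)$ for that $\beta$, i.e. $x\notin U_\gamma$ for $\gamma<\beta$, which needs $U_\gamma\cap(C_\beta\setminus C_{\beta+1})=\emptyset$; and (ii) that $f$ is continuous on each $\bdiff\alpha(U_\beta)$, because $\bdiff\alpha(U_\beta)\subseteq C_\beta$ and $f|_{C_\beta}$ is continuous on $C_\beta\cap U_\beta$. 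Making the choices in (i) consistent across all $\beta$ simultaneously is the delicate point; I anticipate doing this by a simultaneous transfinite recursion, at stage $\beta$ shrinking $U_\beta$ (still keeping it $\supseteq C_\beta\setminus C_{\beta+1}$) to avoid the finitely-described-by-basis pieces of the already-exited points, and appealing to countable basedness to keep everything expressible. The limit-stage case of the induction again rests on clause (3) and on $C_\lambda=\bigcap_{\beta<\lambda}C_\beta$ being closed. This last bookkeeping — getting the open sets to "nest correctly" so that the $\bdiff\alpha$-differences land exactly in the right $\cal L$-layers — is the crux of the argument.
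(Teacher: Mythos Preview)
Your overall strategy is right, and your forward direction ($\bdiff{\alpha}$-piecewise continuous $\Rightarrow$ $\cal L_\alpha(f)=\emptyset$) matches the paper's Part~2: after making the $U_\beta$ increasing, one shows by induction on $\beta$ that $\cal L_{\beta+1}(f)\subseteq X\setminus U_\beta$, using that $U_\beta\setminus\bigcup_{\gamma<\beta}U_\gamma$ is open in the closed remainder and that $f$ is continuous there.

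For the converse, however, you have manufactured a difficulty that does not exist. You write that an open $U_\beta\subseteq X$ with $U_\beta\cap C_\beta=C_\beta\setminus C_{\beta+1}$ ``need not exist in a non-metrizable $T_0$ space'', but it always does: simply take $U_\beta=X\setminus C_{\beta+1}=X\setminus\cal L_{\beta+1}(f)$. The point you missed is that each $\cal L_{\beta+1}(f)$ is closed \emph{in $X$}, not merely relatively closed in $\cal L_\beta(f)$ --- the closure in Definition~\ref{def:levelofdiscont} is taken in $X$, and in any case a closed subset of the closed set $\cal L_\beta(f)$ is closed in $X$. With this canonical choice one computes directly that $\bigcup_{\gamma<\beta}U_\gamma=X\setminus\cal L_\beta(f)$ and hence $\bdiff{\alpha}(U_\beta)=\cal L_\beta(f)\setminus\cal L_{\beta+1}(f)$, which is open in $\cal L_\beta(f)$ and consists entirely of points of continuity of $f|_{\cal L_\beta(f)}$; the covering and continuity conditions follow immediately, with no transfinite recursion, no ``shrinking'', and no appeal to countable basedness. (Your side remark that $C_\beta\setminus C_{\beta+1}$ is ``relatively dense'' in $C_\beta$ is also off: nothing forces it to be dense.) So the ``crux'' you anticipate dissolves once you pick the obvious $U_\beta$, and this is exactly what the paper does.
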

\begin{proof}
We divide the proof into two parts. In Part 1, we show that if $Lev(f)=\alpha$ then $f$ is $\bdiff {\alpha}$-piecewise continuous. In Part 2, we show that if $f$ is $\bdiff {\alpha}$-piecewise continuous then $Lev(f)\leq \alpha$. The theorem clearly follows from these two claims.

(\emph{Part 1}): First assume that $Lev(f)=\alpha$. Clearly, $\alpha\geq 1$ because $\cal L_0(f)=X\not=\emptyset$. For $\beta<\alpha$, define $U_\beta = X\setminus \cal L_{\beta+1}(f)$. Clearly $U_\beta$ is open. Note that 
\begin{eqnarray*}
\bigcup_{\gamma<\beta}U_{\gamma} &=& \bigcup_{\gamma<\beta}\big(X\setminus \cal L_{\gamma+1}(f)\big)\\
		&=& X\setminus \bigcap_{\gamma<\beta}\cal L_{\gamma+1}(f)\\
		&=& X\setminus \cal L_{\beta}(f)
\end{eqnarray*}
where the last equality holds when $\beta$ is a limit ordinal by definition of $\cal L_{\beta}(f)$ and holds when $\beta$ is a successor by the fact that $\{\cal L_{\gamma+1}\}_{\gamma<\beta}$ is a decreasing sequence that ends with $\cal L_{\beta}$. It follows that
\begin{eqnarray*}
\bdiff {\alpha}(U_\beta) &=& U_{\beta}\setminus \bigcup_{\gamma<\beta}U_{\gamma}\\
				&=& \big(X\setminus \cal L_{\beta+1}(f)\big)\setminus \big(X \setminus \cal L_{\beta}(f)\big)\\
				&=& \cal L_{\beta}(f) \setminus \cal L_{\beta+1}(f).
\end{eqnarray*}
We first show that $X=\bigcup_{\beta<\alpha}\bdiff {\alpha}(U_\beta)$. For $x\in X$, let $\beta_x = \min\{ \beta \,|\, x\not\in \cal L_{\beta}(f)\}$. Since $\cal L_{\alpha}(f)=\emptyset$ by assumption, we have that $\beta_x$ is defined and $\beta_x\leq \alpha$. It is also clear that $\beta_x$ is a successor ordinal, because if $\beta_x$ was a limit ordinal then $x\in\cal L_{\gamma}(f)$ for all $\gamma<\beta$ (by our choice of minimal $\beta_x$) hence $x\in \cal L_{\beta_x}(f)$ (by definition of $\cal L_{\beta_x}$ for limit $\beta_x$), a contradiction. Therefore, $\beta_x = \gamma_x+1$ for some ordinal $\gamma_x<\alpha$. It follows that $x\in \bdiff {\alpha}(U_{\gamma_x})$, hence $X=\bigcup_{\beta<\alpha}\bdiff {\alpha}(U_\beta)$.

It only remains to show that $f|_{\bdiff {\alpha}(U_\beta)}$ is continuous for all $\beta<\alpha$. Assume for a contradiction that $f|_{\bdiff {\alpha}(U_\beta)}$ is discontinuous at some point $x$. Since $\bdiff {\alpha}(U_\beta)$ is a subspace of $\cal L_{\beta}(f)$, it must be the case that $f|_{\cal L_{\beta}(f)}$ is also discontinuous at $x$. Therefore, $x\in \cal L_{\beta+1}(f)$, contradicting $x\in \bdiff {\alpha}(U_\beta)$. Thus, $f|_{\bdiff {\alpha}(U_\beta)}$ is continuous for all $\beta<\alpha$. 

(\emph{Part 2}): Let $\{U_{\beta}\}_{\beta< \alpha}$ be open subsets of $X$ such that $X=\bigcup_{\beta< \alpha}\bdiff \alpha(U_\beta)$ and $f|_{\bdiff \alpha(U_\beta)}\colon \bdiff \alpha(U_\beta) \to Y$ is continuous for all $\beta<\alpha$. We can assume without loss of generality that $\bigcup_{\gamma<\beta}U_\gamma \subseteq U_\beta$ for all $\beta<\alpha$.

We claim that $\cal L_{\beta+1}(f)\subseteq X\setminus U_\beta$ for all $\beta<\alpha$. The case $\beta=0$ is easy, so assume that $\beta>0$ and the claim holds for all $\gamma<\beta$. First note that, since $f|_{\bdiff \alpha(U_\beta)}=f|_{U_\beta \setminus \bigcup_{\gamma<\beta} U_\gamma}$ is continuous by assumption, and since $U_\beta \setminus \bigcup_{\gamma<\beta} U_\gamma$ is an open subspace of $X \setminus \bigcup_{\gamma<\beta} U_\gamma$, $f|_{X\setminus \bigcup_{\gamma<\beta} U_\gamma}$ is continuous at every $x\in U_\beta$. Now assume that $f|_{\cal L_{\beta}(f)}$ is discontinuous at $x$. By induction hypothesis,
\[\cal L_\beta(f) = \bigcap_{\gamma<\beta}\cal L_{\gamma+1}(f)\subseteq \bigcap_{\gamma<\beta} X\setminus U_\gamma= X\setminus \bigcup_{\gamma<\beta} U_\gamma\]
so it follows that $f|_{X\setminus \bigcup_{\gamma<\beta} U_\gamma}$ is discontinuous at $x$. Therefore, $x\not\in U_\beta$. Hence,
\[\{ x\in \cal L_{\beta}(f)\,|\, f|_{\cal L_{\beta}(f)} \mbox{ is discontinuous at } x\}\subseteq X\setminus U_\beta\]
and it follows that $\cal L_{\beta+1}(f)\subseteq X\setminus U_\beta$ because $X\setminus U_\beta$ is closed. This concludes the proof of the claim.

Since $\{X\setminus U_\beta\}_{\beta<\alpha}$ is a decreasing sequence of closed sets and $\bigcap_{\beta<\alpha}(X\setminus U_\beta) = \emptyset$, the claim implies that $\cal L_{\alpha}(f)=\emptyset$, hence $Lev(f)\leq \alpha$.
\end{proof}

For each countable ordinal $\alpha$, we let $\alpha^{op}$ denote the topological space whose points are the ordinals less than $\alpha$ and whose open sets are generated from the sets $\downarrow\!\beta = \{\gamma \,|\, \gamma\leq \beta\}$ for each $\beta<\alpha$.

Let $f\colon X\to Y$ be a function between countably based spaces, and let $\alpha$ be a countable ordinal. An \emph{$\alpha$-indexing function} for $f$ is a continuous function $\iota\colon X\to \alpha^{op}$ such that for any continuous function $\xi \colon \omega_\infty \to X$, if $\iota\circ \xi$ is eventually constant then $f\circ \xi$ is continuous. 

The existence of an $\alpha$-indexing function is a necessary and sufficient condition for a function to be $\bdiff \alpha$-piecewise continuous. If $f$ is $\bdiff {\alpha}$-piecewise continuous, then the function $\iota$ mapping $\bdiff \alpha(U_\beta)$ to $\beta$ is an $\alpha$-indexing function for $f$. Conversely, if $\iota$ is an $\alpha$-indexing function for $f$, then defining $U_\beta = \iota^{-1}(\downarrow\!\beta)$ demonstrates that $f$ is $\bdiff \beta$-piecewise continuous.

The function $\iota_\alpha \colon dom(j_\alpha) \to \alpha^{op}$, defined as mapping $\langle\langle \beta_n\rangle_\alpha\diamond \xi_n \rangle_{n\in\omega}$ to $\min\{\beta_n\,|\,n\in\omega\}$, is an $\alpha$-indexing function for the jump operator $j_\alpha$.

We can now completely characterize $j_\alpha$-realizability for functions between countably based spaces.

\begin{theorem}
Let $f\colon X\to Y$ be a function between admissibly represented countably based $T_0$-spaces and let $\alpha$ be a countable ordinal. Then $f$ is $j_\alpha$-realizable if and only if $Lev(f)\leq \alpha$ if and only if $f$ is $\bdiff \alpha$-piecewise continuous.
\end{theorem}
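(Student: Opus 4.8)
The plan is to prove the equivalence as a cycle of implications, reusing as much of the structure already established as possible. The equivalence ``$Lev(f) \leq \alpha$ if and only if $f$ is $\bdiff\alpha$-piecewise continuous'' is immediate from Theorem~\ref{thrm:jalpha_char}, so the real content is to link $j_\alpha$-realizability to $\bdiff\alpha$-piecewise continuity. I would prove both directions of this link directly, using $\alpha$-indexing functions as the intermediary, in close parallel to the proof of Theorem~\ref{thrm:jdelta_char}. As there, I would fix admissible representations $\rho_X$ and $\rho_Y$, and assume without loss of generality that $\rho_X$ is an open map with Polish fibers, and similarly for $\rho_Y$.

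For the forward direction, suppose $F$ is a continuous $j_\alpha$-realizer of $f$, so $f\circ\rho_X = \rho_Y\circ j_\alpha\circ F$. Then $\iota' = \iota_\alpha\circ F \colon \subseteq\baire\to\alpha^{op}$ is continuous (being a composition of continuous maps, since $\iota_\alpha$ is continuous into $\alpha^{op}$) and, because $\iota_\alpha$ is an $\alpha$-indexing function for $j_\alpha$, one checks that $\iota'$ is an $\alpha$-indexing function for $f\circ\rho_X$: if $\xi'\colon\omega_\infty\to\baire$ has $\iota'\circ\xi'$ eventually constant, then $\iota_\alpha\circ(F\circ\xi')$ is eventually constant, so $j_\alpha\circ F\circ\xi'$ is continuous, hence $f\circ\rho_X\circ\xi'$ is continuous. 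The task is then to ``push $\iota'$ down'' through $\rho_X$ to an $\alpha$-indexing function $\iota\colon X\to\alpha^{op}$ for $f$. This is exactly the maneuver in Part~1 (the forward direction) of Theorem~\ref{thrm:jdelta_char}: using that $\rho_X$ is open with Polish fibers, write the preimages $\iota'^{-1}(\{\gamma\})$ as countable unions of closed sets, apply the Baire category theorem on each fiber $\rho_X^{-1}(x)$ to locate a basic open set $B_k$ witnessing membership, and define $\iota(x)$ by taking the lexicographically least witnessing index, whose first coordinate records the ordinal $\gamma$. One must verify that the resulting $\iota$ is continuous into $\alpha^{op}$ (i.e. that $\iota^{-1}(\downarrow\!\beta)$ is open) and that it is genuinely an $\alpha$-indexing function for $f$; the argument is the same sequential-continuity computation as in Theorem~\ref{thrm:jdelta_char}, now tracking ordinal values instead of natural numbers. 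By the correspondence between $\alpha$-indexing functions and $\bdiff\alpha$-piecewise continuity noted just before the theorem, this shows $f$ is $\bdiff\alpha$-piecewise continuous.

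For the converse, suppose $f$ is $\bdiff\alpha$-piecewise continuous, with $\alpha$-indexing function $\iota\colon X\to\alpha^{op}$. Then $\iota' = \iota\circ\rho_X$ is an $\alpha$-indexing function for $f\circ\rho_X$, and setting $U_\gamma = \iota'^{-1}(\downarrow\!\gamma)$ gives open sets with $f\circ\rho_X$ continuous on each $\bdiff\alpha(U_\gamma)$; by admissibility of $\rho_Y$ each such restriction has a continuous realizer $F_\gamma$. I would then describe a Type Two Turing machine computing a $j_\alpha$-realizer $F$, mimicking the ``gluing'' construction in the converse half of Theorem~\ref{thrm:jdelta_char}, with one addition: alongside each guess $F_\gamma(\xi)$ the machine must also output the current ordinal estimate $\langle\gamma\rangle_\alpha$ (prepended, in the $\langle\beta\rangle_\alpha\diamond\xi$ format of Example~\ref{ex:jalpha}). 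The key observation making this work is that the sequence of pointer values the machine visits is strictly decreasing in $\alpha^{op}$: when the machine observes $\rho_X(\xi)\notin \bdiff\alpha(U_\gamma)$ it revises to some strictly smaller ordinal (this uses $\bigcup_{\delta<\gamma}U_\delta\subseteq U_\gamma$, which we may assume). This guarantees the ordinal bounds are non-increasing as required by $dom(j_\alpha)$, and that a guess changes only when the ordinal changes; since the ordinals lie in a well-order the process stabilizes after finitely many revisions at the correct value, yielding a valid element of $dom(j_\alpha)$ whose limit is $f(\rho_X(\xi))$.

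I expect the main obstacle to be the forward direction — specifically verifying that the ordinal-valued function $\iota$ extracted via the Baire-category argument is actually \emph{continuous} into $\alpha^{op}$ and is a legitimate $\alpha$-indexing function. The subtlety is that $\alpha^{op}$ is not discrete, so ``continuity'' is a real condition (one needs $\iota^{-1}(\downarrow\!\beta)$ open for every $\beta$), and the lexicographic-minimum choice must be arranged so that this holds; the bookkeeping for how the ordinal coordinate interacts with the open-map property of $\rho_X$ is where care is needed. Everything else is a routine adaptation of Theorem~\ref{thrm:jdelta_char} with ``$\bdelta 2$-measurable'' replaced by ``continuous into $\alpha^{op}$'' and ``eventually constant natural-number index'' replaced by ``eventually constant ordinal index.''
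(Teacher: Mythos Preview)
Your overall plan and the converse direction are essentially what the paper does. The gap is exactly where you flagged it: in the forward direction, the Baire-category construction of $\iota$ does not yield continuity into $\alpha^{op}$. With the lex-least definition you describe, $\iota^{-1}(\downarrow\beta)$ is only a countable union of sets of the form $U^k_{\gamma,i}\setminus V^k_{\gamma,i}$ (differences of opens), hence $\bsigma 2$ but not open in general. You correctly identify this as the obstacle, but you do not resolve it, and I do not see a clean way to make the lex-least approach produce an open preimage.

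The paper sidesteps this entirely by noting that the Baire-category maneuver from Theorem~\ref{thrm:jdelta_char} is unnecessary here. The key difference from the $j_\Delta$ case is that $\iota_\alpha$ is \emph{continuous} into $\alpha^{op}$, whereas $\iota_\Delta$ is only $\bdelta 2$-measurable into $\omega$. Thus $\iota'^{-1}(\downarrow\beta)$ is already open in $\baire$, and one simply sets $U_\beta = \rho_X(\iota'^{-1}(\downarrow\beta))$, which is open because $\rho_X$ is open, and defines $\iota(x) = \min\{\beta : x\in U_\beta\}$. Continuity is then immediate from $\iota^{-1}(\downarrow\beta) = \bigcup_{\gamma\leq\beta}U_\gamma$, and the $\alpha$-indexing property follows by the same sequential-continuity check you outlined. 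No Polish-fiber hypothesis or Baire category is invoked in this direction at all; you imported that machinery from the $j_\Delta$ proof where it was genuinely needed to upgrade a $\bdelta 2$-measurable index to something usable, but here the index is already continuous.
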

\begin{proof}
As before, we assume without loss of generality that the admissible representations $\rho_X$ and $\rho_Y$ are open maps with Polish fibers.

Assume $F\colon \subseteq \baire\to\baire$ $j_\alpha$-realizes $f$. Then $\iota' = \iota_\alpha \circ F$ is an $\alpha$-indexing function for $f\circ \rho_X = \rho_Y\circ j_\alpha\circ F$. Let $U_\beta = \rho_X(\iota'^{-1}(\downarrow\!\beta))$ for $\beta<\alpha$. Clearly, $U_\beta$ is an open subset of $X$ because $\rho_X$ is an open map. Finally, define $\iota\colon X \to \alpha^{op}$ so that $x\mapsto \min\{\beta\,|\, x\in U_\beta\}$. It is easy to see that $\iota$ is a well-defined total function. For each ordinal $\beta<\alpha$, $\iota^{-1}(\downarrow\!\beta)=\bigcup_{\gamma\leq\beta} U_\gamma$, hence $\iota$ is continuous.

Let $\xi\colon \omega_\infty \to X$ be a continuous function such that $\iota\circ \xi$ is eventually constant. The admissibility of $\rho_X$ implies there is continuous $\xi'\colon \omega_\infty\to\baire$ such that $\xi = \rho_X\circ \xi'$. Assume $(\iota\circ\xi)(\infty) = \beta$. Then $\xi$ is eventually in $\bdiff \alpha(U_\beta)$, hence $\xi'$ is eventually in $\iota'^{-1}(\downarrow\!\beta)\setminus \bigcup_{\gamma<\beta}\iota'^{-1}(\downarrow\!\gamma)$. It follows that $\iota'\circ\xi'$ is eventually equal to $\beta$. Since $\iota'$ is an $\alpha$-indexing function for $f\circ \rho_X$, it follows that $f\circ \xi = f\circ \rho_X\circ \xi'$ is continuous. Therefore, $\iota$ is an $\alpha$-indexing function for $f$, and we have proven that $f$ is $\bdiff \alpha$-piecewise continuous.

For the converse, we will use the same method as in the proof of Theorem \ref{thrm:jdelta_char} and define an oracle Type Two Turing Machine that computes a $j_\alpha$-realizer for $f$. Let $\iota\colon X\to \omega$ be an $\alpha$-indexing function for $f$. Then $\iota' = \iota \circ \rho_X$ is an $\alpha$-indexing function for $f\circ \rho_X$. Let $U_\beta = \iota'^{-1}(\downarrow\!\beta)$ for $\beta<\alpha$, and we have that $f\circ\rho_X$ restricted to $\bdiff \alpha(U_\beta)$ is continuous. By the admissibility of $\rho_Y$, there is continuous $F_\beta\colon \subseteq \baire\to\baire$ that realizes the restriction of $f\circ \rho_X$ to $\bdiff \alpha(U_\beta)$.

Our algorithm is as follows. Begin reading in the input $\xi\in\baire$, and search in parallel for $\beta<\alpha$ such that $\xi \in U_\beta$. Such a $\beta$ can be found after reading in a finite prefix of $\xi$ because each $U_\beta$ is open and the $U_\beta$ cover the domain of $f\circ\rho_X$. The algorithm then initializes an ordinal counter $\hat{\beta} := \beta$ and attempts to write to the output tape an infinite sequence of copies of the element $\langle  \hat{\beta} \rangle_\alpha\diamond F_{\hat{\beta}}(\xi)$. While outputting the copies of $\langle  \hat{\beta} \rangle_\alpha\diamond F_{\hat{\beta}}(\xi)$ the algorithm continues to search for some $\gamma<\hat{\beta}$ such that $\xi \in U_\gamma$. If such a $\gamma$ is ever found, then the algorithm sets $\hat{\beta} := \gamma$ and begins outputting an infinite sequence of copies of $\langle  \hat{\beta} \rangle_\alpha\diamond F_{\hat{\beta}}(\xi)$ for the new value of $\hat{\beta}$. It is easy to see that such an algorithm computes a $j_\alpha$-realizer for $f$.
\end{proof}

\section{Examples and applications}

In this last section of this paper we provide a few examples and applications of $j_\Delta$ and $j_\alpha$-realizability. 

%%%%%%%%%%%%%%%%%%%%%%%%%%%%%%%%%
\subsection{The Difference Hierarchy}
%%%%%%%%%%%%%%%%%%%%%%%%%%%%%%%%%

Given a jump operator $j$ and a represented space $X$, recall that ${\bf \Delta}_j(X)$ is the set of $j$-realizable functions from $X$ into the discrete two point space ${\bf 2}=\{0,1\}$. In this section, we will show that ${\bf \Delta}_{j_\alpha}(X)$ ($1\leq\alpha<\omega_1$) correspond to the ambiguous levels of the difference hierarchy when $X$ is a countably based space. 

\begin{definition}
Any ordinal $\alpha$ can be expressed as $\alpha=\beta+n$, where $\beta$ is a limit ordinal or $0$, and $n<\omega$. We say that $\alpha$ is \emph{even} if $n$ is even, and \emph{odd}, otherwise. For any ordinal $\alpha$, let $r(\alpha)=0$ if $\alpha$ is even, and $r(\alpha)=1$, otherwise. For any ordinal $\alpha$, define
\[\mybf D_{\alpha}(\{A_{\beta}\}_{\beta<\alpha})=\bigcup \{A_{\beta}\setminus \big(\bigcup_{\gamma<\beta}A_{\gamma}\big) \,|\, \beta<\alpha,\, r(\beta)\not=r(\alpha)\},\]
where $\{A_{\beta}\}_{\beta<\alpha}$ is a sequence of sets such that $A_{\gamma}\subseteq A_{\beta}$ for all $\gamma < \beta < \alpha$. 

For any topological space $X$ and ordinal $\alpha$, define ${\bf \Sigma}^{-1}_{\alpha}(X)$ to be the set of all sets $\mybf D_{\alpha}(\{U_{\beta}\}_{\beta<\alpha})$, where $\{U_{\beta}\}_{\beta<\alpha}$ is an increasing sequence of open subsets of $X$. 
\qed
\end{definition}

The following connection with the difference hierarchy has already been observed by both P. Hertling and V. Selivanov, so we omit the proof.

\begin{proposition}[see \cite{selivanov2007}]
If $X$ is a countably based $T_0$-space and $1\leq \alpha <\omega_1$, then a total function $f\colon X\to {\bf 2}$ is $j_\alpha$-realizable if and only if both $f^{-1}(1)$ and $f^{-1}(0)$ are in ${\bf \Sigma}^{-1}_{\alpha}(X)$.
\qed
\end{proposition}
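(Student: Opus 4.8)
The plan is to translate between $j_\alpha$-realizability of $f\colon X\to\mathbf 2$ and membership of $f^{-1}(1)$, $f^{-1}(0)$ in $\mathbf\Sigma^{-1}_\alpha(X)$ by going through the $\bdiff\alpha$-piecewise continuity characterization already established in Theorem~\ref{thrm:jalpha_char} (and its companion with admissible representations). Since the codomain $\mathbf 2$ is discrete, continuity of a restriction $f|_{S}$ simply means that $S$ is partitioned into the two relatively clopen pieces $S\cap f^{-1}(0)$ and $S\cap f^{-1}(1)$. So the whole content is bookkeeping about how the ``sign'' function $r(\cdot)$ interacts with the stratification $\{\bdiff\alpha(U_\beta)\}_{\beta<\alpha}$.

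First I would assume $f$ is $j_\alpha$-realizable, hence $\bdiff\alpha$-piecewise continuous, so fix open sets $\{U_\beta\}_{\beta<\alpha}$ with $X=\bigcup_{\beta<\alpha}\bdiff\alpha(U_\beta)$, $f|_{\bdiff\alpha(U_\beta)}$ continuous, and (as in Part~2 of Theorem~\ref{thrm:jalpha_char}) $\bigcup_{\gamma<\beta}U_\gamma\subseteq U_\beta$ for all $\beta$. The idea is to split each stratum $\bdiff\alpha(U_\beta)$ according to the value $f$ takes on it near a given point; concretely, for $i\in\{0,1\}$ define $V^i_\beta = U_\beta \cap \big(\bigcup_{\gamma<\beta}U_\gamma \cup (\text{the open-in-}\bdiff\alpha(U_\beta)\text{ set }f^{-1}(i))\big)$, arranged so that $\{V^i_\beta\}_{\beta<\alpha}$ is an increasing family of open sets with $\bdiff\alpha(V^i_\beta) = \bdiff\alpha(U_\beta)\cap f^{-1}(i)$. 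Then one checks, by the very definition of $\mathbf D_\alpha$, that choosing the phase correctly (i.e.\ shifting the indexing by one when $r(\alpha)$ does not match the parity of the pieces we want to collect) gives $f^{-1}(1)=\mathbf D_\alpha(\{V^1_\beta\}_{\beta<\alpha})$ up to a harmless reindexing, and symmetrically for $f^{-1}(0)$; the point is that $X=\bigcup_\beta \bdiff\alpha(V^i_\beta)$ for each $i$ and the two families are complementary stratum-by-stratum. This shows both preimages lie in $\mathbf\Sigma^{-1}_\alpha(X)$.

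Conversely, suppose $f^{-1}(1)=\mathbf D_\alpha(\{U_\beta\}_{\beta<\alpha})$ and $f^{-1}(0)=\mathbf D_\alpha(\{W_\beta\}_{\beta<\alpha})$ with $\{U_\beta\}$, $\{W_\beta\}$ increasing families of open sets. Here I would interleave the two families: define $G_\beta = U_\beta\cup W_\beta$ (again increasing and open), and observe that each difference-stratum $\bdiff\alpha(G_\beta)$ is contained in $\bdiff\alpha(U_\beta)\cup\bdiff\alpha(W_\beta)$, on which $f$ is constant by the definition of $\mathbf D_\alpha$ (a point enters $f^{-1}(1)$ iff it first lands in some $U_\beta$ with $r(\beta)\neq r(\alpha)$, etc.). One must check $X=\bigcup_{\beta<\alpha}\bdiff\alpha(G_\beta)$, which follows because every $x$ is assigned a value $f(x)\in\{0,1\}$ and hence enters either the $U$- or the $W$-hierarchy at some stage, and that $f$ restricted to each $\bdiff\alpha(G_\beta)$ is continuous, which is automatic once it is constant. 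This exhibits $f$ as $\bdiff\alpha$-piecewise continuous, hence $j_\alpha$-realizable by Theorem~\ref{thrm:jalpha_char} (using the admissibility hypothesis to pass from piecewise continuity to the existence of a continuous $j_\alpha$-realizer).

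The main obstacle is the parity/phase bookkeeping: the operator $\mathbf D_\alpha$ only collects strata $\beta$ with $r(\beta)\neq r(\alpha)$, so one cannot naively identify ``the $\beta$-th stratum of $f^{-1}(i)$'' with ``$\bdiff\alpha(U_\beta)$'' — the indices that actually contribute are interleaved, and at limit levels the parity convention $\alpha=\lambda+n$ matters. Getting the reindexing right so that the increasing families $\{V^i_\beta\}$ really do reconstruct $f^{-1}(i)$ as an honest $\mathbf D_\alpha$-set (rather than as a $\mathbf D_{\alpha}$-set with one extra or one fewer level, which would land it in $\mathbf\Sigma^{-1}_{\alpha+1}$ or $\mathbf\Sigma^{-1}_{\alpha}$-dual) is the delicate point; this is exactly the classical relationship between the ambiguous class $\bdiff\alpha$ and the Hausdorff difference hierarchy, which is why Hertling and Selivanov had already observed it and why the paper omits the routine verification.
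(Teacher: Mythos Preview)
The paper omits the proof entirely, deferring to Hertling and Selivanov, so there is no in-paper argument to compare against. Your overall strategy of passing through the $\bdiff\alpha$-piecewise continuity characterization is the natural one, and your converse direction is essentially right; however, the forward direction as written has a genuine gap.

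\medskip
\textbf{Converse (minor slip).} You assert that $f$ is \emph{constant} on each stratum $\bdiff\alpha(G_\beta)$. That is false: on $\bdiff\alpha(G_\beta)\cap\bdiff\alpha(U_\beta)$ the value of $f$ is determined by whether $r(\beta)\neq r(\alpha)$, whereas on $\bdiff\alpha(G_\beta)\cap\bdiff\alpha(W_\beta)$ it is determined by the \emph{opposite} parity condition, so the two pieces generally carry different constant values. What saves you is that $\bdiff\alpha(G_\beta)\cap U_\beta$ and $\bdiff\alpha(G_\beta)\cap W_\beta$ are both relatively open and cover $\bdiff\alpha(G_\beta)$, so $f$ is locally constant, hence continuous, on the stratum. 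With that correction the converse goes through.

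\medskip
\textbf{Forward direction (real gap).} Your definition of $V^i_\beta$ does not yield $\bdiff\alpha(V^i_\beta)=\bdiff\alpha(U_\beta)\cap f^{-1}(i)$. Already at $\beta=1$ one computes
\[
V^i_1\setminus V^i_0 \;=\; \big(U_0\setminus O^i_0\big)\cup\big(O^i_1\setminus O^i_0\big),
\]
whose $U_0$-part is $f^{-1}(1{-}i)\cap S_0$, not empty. More importantly, even if the strata were as you claim, applying $\mathbf D_\alpha$ to $\{V^i_\beta\}$ would only collect those strata with $r(\beta)\neq r(\alpha)$, giving $\bigcup\{S_\beta\cap f^{-1}(i): r(\beta)\neq r(\alpha)\}$ rather than all of $f^{-1}(i)$; your remark that ``$X=\bigcup_\beta\bdiff\alpha(V^i_\beta)$'' is then inconsistent with the intended strata. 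The ``harmless reindexing'' you invoke is precisely where the argument lives: one really must interleave the $f^{-1}(0)$- and $f^{-1}(1)$-parts of successive $S_\beta$ so that a point lands in the $V$-hierarchy at an index whose parity records $f(x)$, and this has to be done without overshooting $\alpha$. The clean way is to work with the canonical Hertling sets $\cal L_\beta(\chi_A)$ (which for $\chi_A$ are exactly the iterated boundaries $\partial_{\cal L_\beta}(A\cap\cal L_\beta)$) and build the $\Sigma^{-1}_\alpha$ presentation of $A$ and of $X\setminus A$ directly from those, as in Hertling and Selivanov; your sketch gestures at this but does not carry it out.
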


%%%%%%%%%%%%%%%%%%%%%%%%%%%%%%%
\subsection{Cantor-Bendixson Rank}
%%%%%%%%%%%%%%%%%%%%%%%%%%%%%%%

A \emph{limit point} of a topological space is a point that is not isolated, i.e. a point $x$ such that for every open $U$ containing $x$ there is $y\in U$ distinct from $x$. A space is \emph{perfect} if all of its points are limit points.

\begin{definition}[see \cite{kechris}]
For any topological space $X$, let
\[X' = \{x\in X\,|\, x\mbox{ is a limit point of } X\}.\]
For ordinal $\alpha$, define $X^{(\alpha)}$ recursively as follows:
\begin{enumerate}
\item
$X^{(0)}=X$,
\item
$X^{(\alpha+1)}=(X^{(\alpha)})'$,
\item
If $\alpha$ is a limit ordinal, then $X^{(\alpha)}=\bigcap_{\beta<\alpha}X^{(\beta)}$.
\end{enumerate}
If $X$ is countably based, then there is a least countable ordinal $\alpha_0$ such that $X^{(\alpha)}=X^{(\alpha_0)}$ for all $\alpha\geq \alpha_0$. Such $\alpha_0$ is called the \emph{Cantor-Bendixson rank} of $X$, and is denoted $|X|_{CB}$. We also let $X^{\infty}=X^{(|X|_{CB})}$, which is a perfect subset of $X$.
\qed
\end{definition}

Assuming $X$ is countably based, $X\setminus X^{\infty}$ must be countable. This is because for every $x\in (X\setminus X')$ there must be a (basic) open $U$ containing $x$ and no other elements of $X$, so $(X\setminus X')$ must be countable.

Let $\omega_\bot=\omega\cup\{\bot\}$ be such that $\{n\}$ is open for $n\in \omega$ and the only open set containing $\bot$ is $\omega_\bot$ itself. Given countably based $X$, define $p {\colon} X\to \omega_\bot$ so that $p(x)=\bot$ for $x\in X^{\infty}$ and $p$ restricted to the elements of $X\setminus X^{\infty}$ is injective into $\omega$. 

The following is a generalization of a result by Luo and Schulte \cite{luo_schulte} concerning ordinal mind-change complexity of inductive inference (see also \cite{debrecht_etal_2, debrecht_etal_3}).

\begin{proposition}
For any countably based space $X$, $p {\colon} X\to \omega_\bot$ is $j_\alpha$-realizable, where
\begin{enumerate}
\item
$\alpha= |X|_{CB}$ if $X^{\infty}=\emptyset$ or $|X|_{CB}$ is a successor ordinal
\item
$\alpha= |X|_{CB}+1$ if $X^{\infty}\not=\emptyset$ and $|X|_{CB}$ is a limit ordinal
\end{enumerate}
\end{proposition}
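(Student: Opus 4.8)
The plan is to construct an $\alpha$-indexing function $\iota$ for $p$ directly from the Cantor–Bendixson derivative, and then invoke the characterization theorem (that $j_\alpha$-realizability is equivalent to existence of an $\alpha$-indexing function, equivalently $\bdiff\alpha$-piecewise continuity). First I would observe that the Cantor–Bendixson derivatives give a decreasing transfinite sequence of closed sets $X = X^{(0)} \supseteq X^{(1)} \supseteq \cdots$, so the complements $U_\beta = X \setminus X^{(\beta+1)}$ form an increasing family of open sets. For each point $x \notin X^\infty$ there is a least ordinal $\beta_x$ with $x \notin X^{(\beta_x)}$, and as in Part~1 of Theorem~\ref{thrm:jalpha_char} this $\beta_x$ is a successor, say $\beta_x = \gamma_x + 1$; I would define $\iota(x) = \gamma_x$, so that $x \in \bdiff\alpha(U_{\gamma_x}) = X^{(\gamma_x)} \setminus X^{(\gamma_x+1)}$. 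For $x \in X^\infty$ there is no such ordinal, and this is precisely why the two cases appear: when $X^\infty = \emptyset$ every point is handled and $\{U_\beta\}_{\beta < |X|_{CB}}$ covers $X$; when $X^\infty \neq \emptyset$ one needs an extra ``top'' index $\beta = |X|_{CB}$ with $\bdiff\alpha(U_{|X|_{CB}}) = X^\infty$ to absorb $X^\infty$, and the bookkeeping $\alpha = |X|_{CB}$ versus $|X|_{CB}+1$ reflects whether $|X|_{CB}$ itself is already a successor (so $\beta$ ranges over $\beta < |X|_{CB}+1$ anyway when $|X|_{CB}$ is a limit) — this is the only subtle point and I would check it against Definition~\ref{def:dalphapiecewise}'s convention that the index set is $\{\beta : \beta < \alpha\}$.

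Next I would verify that $\iota$ really is an $\alpha$-indexing function, which has two parts. Continuity of $\iota \colon X \to \alpha^{op}$: one checks $\iota^{-1}(\downarrow\!\beta) = \bigcup_{\gamma \le \beta} U_\gamma$ (for $\beta$ below the relevant bound), which is open; the top value, if present, has $\iota^{-1}(\{|X|_{CB}\}) = X^\infty$ and $\iota^{-1}(\downarrow\!|X|_{CB}) = X$, so continuity still holds since $\downarrow\!|X|_{CB}$ is the whole space $\alpha^{op}$. The ``piecewise continuity'' requirement: I must show that $p$ restricted to each piece $\bdiff\alpha(U_\beta) = X^{(\beta)} \setminus X^{(\beta+1)}$ is continuous. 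But $X^{(\beta)} \setminus X^{(\beta+1)}$ consists exactly of the points that are isolated in the subspace $X^{(\beta)}$, so this piece is a discrete space, on which $p$ (being injective into $\omega$, hence any map into $\omega_\bot$) is automatically continuous. For the top piece $X^\infty$, $p$ is constantly $\bot$, which is trivially continuous. So all pieces are handled.

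Having produced a valid $\alpha$-indexing function, the characterization theorem immediately yields that $p$ is $\bdiff\alpha$-piecewise continuous, hence $j_\alpha$-realizable, completing the proof. I would also remark that the set $X \setminus X^\infty$ is countable (noted in the text just before the proposition), so $p$ is genuinely well-defined as an injection of $X \setminus X^\infty$ into $\omega$.

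The main obstacle, such as it is, is not any deep argument but getting the index-set arithmetic exactly right: reconciling the three situations ($X^\infty = \emptyset$; $X^\infty \neq \emptyset$ with $|X|_{CB}$ a successor; $X^\infty \neq \emptyset$ with $|X|_{CB}$ a limit) with Definition~\ref{def:dalphapiecewise}'s strict inequality $\beta < \alpha$ and with the fact that $\bdiff\alpha(U_\beta)$ depends only on the $U_\gamma$ for $\gamma \le \beta$. Concretely I expect to spend most of the care on case (2): when $|X|_{CB} = \lambda$ is a limit, the family $\{U_\beta\}_{\beta < \lambda}$ covers $X \setminus X^\infty$ but its union over $\beta < \lambda$ is exactly $X \setminus X^{(\lambda)} = X \setminus X^\infty$, so to cover $X^\infty$ one genuinely needs one more open set, $U_\lambda = X$, forcing $\alpha = \lambda + 1$; whereas if $|X|_{CB} = \mu + 1$ is a successor, then $U_\mu = X \setminus X^{(\mu+1)} = X \setminus X^\infty$ already appears with index $\mu < \alpha = \mu+1$, and $\bdiff\alpha(U_{?})$ for the residual set $X^\infty$ is obtained by noting $X^\infty = X \setminus U_\mu$ is itself $\bdiff\alpha(U_\beta)$ for no further $\beta$ — so instead one checks that the family $\{U_\beta\}_{\beta < \mu+1}$ together with the convention gives $X = \bigcup_{\beta<\mu+1}\bdiff{\mu+1}(U_\beta)$ only after verifying $X^\infty$ is caught; the cleanest route is to always throw in a final open set equal to $X$ and then observe the resulting bound simplifies to $|X|_{CB}$ exactly when $|X|_{CB}$ is a successor. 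I would present it that way to keep the case analysis short.
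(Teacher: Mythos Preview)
Your approach is essentially the paper's: define $U_\beta = X\setminus X^{(\beta+1)}$, observe that the resulting pieces $X^{(\beta)}\setminus X^{(\beta+1)}$ are discrete (so $p$ is trivially continuous there), and handle $X^\infty$ separately with a top piece on which $p$ is constant. The paper proceeds directly via Definition~\ref{def:dalphapiecewise} rather than through an $\alpha$-indexing function, but that is only cosmetic.

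There is, however, a small but genuine gap in your successor case with $X^\infty\neq\emptyset$. If $|X|_{CB}=\mu+1$ and you keep all the $U_\beta$ for $\beta\leq\mu$ \emph{and} then ``throw in a final open set equal to $X$'' at a new index $\mu+1$, your index set becomes $\{\beta:\beta<\mu+2\}$ and you have only shown $j_{|X|_{CB}+1}$-realizability, not the claimed $j_{|X|_{CB}}$-realizability. Nothing ``simplifies'' this extra $+1$ away. The paper's fix is not to append but to \emph{replace}: set $U_\beta = X\setminus X^{(\beta+1)}$ for $\beta<\mu$ and redefine the top set $U_\mu := X$. Then the index set is still $\{\beta:\beta<\mu+1\}=\{\beta:\beta<|X|_{CB}\}$, and the top piece is $\bdiff{\alpha}(U_\mu)=X\setminus\bigcup_{\beta<\mu}U_\beta = X^{(\mu)}$. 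Note that this piece is $X^{(\mu)}$, not $X^\infty$; it contains both the isolated points of $X^{(\mu)}$ and the perfect kernel $X^\infty$. Continuity of $p$ on $X^{(\mu)}$ still holds, since at isolated points continuity is automatic and at points of $X^\infty$ the value is $\bot$, whose only neighbourhood is all of $\omega_\bot$. With this correction your argument goes through and matches the paper.
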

\begin{proof}
If $X^{\infty}=\emptyset$, then define $U_\beta = X\setminus (X^{(\beta)})'$ for $\beta<|X|_{CB}$. Letting $\alpha = |X|_{CB}$, we see that $\bdiff \alpha(U_\beta) = X^{(\beta)}\setminus (X^{(\beta)})'$, which is the set of isolated points of $X^{(\beta)}$, hence a discrete subspace of $X$.

If $X^{\infty}\not=\emptyset$ but $|X|_{CB}=\gamma+1$, then set $U_\beta = X\setminus (X^{(\beta)})'$ for $\beta<\gamma$ and $U_\gamma = X$. Letting $\alpha = |X|_{CB}$, $\bdiff \alpha(U_\beta) = X^{(\beta)}\setminus (X^{(\beta)})'$ for $\beta<\gamma$ and $\bdiff \alpha(U_\gamma) = X^{\infty}$.

If $X^{\infty}\not=\emptyset$ and $|X|_{CB}$ is a limit ordinal, then set $U_\beta = X\setminus (X^{(\beta)})'$ for $\beta<|X|_{CB}$ and $U_{|X|_{CB}} = X$. Letting $\alpha = |X|_{CB}+1$, $\bdiff \alpha(U_\beta) = X^{(\beta)}\setminus (X^{(\beta)})'$ for $\beta<|X|_{CB}$ and $\bdiff \alpha(U_{|X|_{CB}}) = X^{\infty}$.

In all three of the above cases, it is easy to see that $U_\beta$ $(\beta<\alpha)$ is open, $X=\bigcup_{\beta<\alpha}\bdiff \alpha(U_\beta)$, and the corresponding restrictions of $p$ are continuous.
\end{proof}

%%%%%%%%%%%%%%%%%%%%%%%%%%%%%%%%%%%
\subsection{Hilbert's basis theorem}
%%%%%%%%%%%%%%%%%%%%%%%%%%%%%%%%%%%

Let $(R,+,\cdot)$ be a commutative ring. A subset $I\subseteq R$ is an \emph{ideal} if and only if $(I,+)$ is a subgroup of $(R,+)$ and $(\forall x\in I)(\forall r\in R)[ x\cdot r \in I ]$. A ring is \emph{Noetherian} if and only if it does not have an infinite strictly ascending chain of ideals.

Given a ring $R$, we let $R[x_1,\ldots,x_n]$ denote the ring of polynomials with coefficients in $R$ and $n$ indeterminates $x_1,\ldots,x_n$. 

A famous theorem by David Hilbert states that if $R$ is a Noetherian ring then $R[x_1,\ldots,x_n]$ is also Noetherian. Hilbert's proof was non-constructive, and was initially criticized by Paul Gordan with the famous quote ``Das ist nicht Mathematik. Das ist Theologie.''

Here we quantify one aspect of the ``non-constructiveness'' of the basis theorem in terms of the level of discontinuity of converting an enumeration of an ideal into a Gr\"{o}bner basis for the ideal. Our approach is much in the same spirit as V. Brattka's project to quantify the non-computability of mathematical theorems in terms of their Weihrauch degrees (see, for example, \cite{brattka_gherardi, brattka_gherardi2,brattka_debrecht_pauly}). Our contribution is only in the way that we formalize the problem, and our main result is essentially a reformulation of results on Hilbert's basis theorem by S. Simpson \cite{simpson1988} in the context of reverse mathematics, and by F. Stephan and Y. Ventsov \cite{stephan_ventsov} in the context of inductive inference.

Let $\bb Q$ be the ring of rational numbers, and let $\cal I_n$ be the set of ideals of the polynomial ring $\bb Q[x_1,\ldots, x_n]$. If we encode the elements of $\bb Q[x_1\ldots,x_n]$ as elements of $\omega$, then each element of $\baire$ can be interpretted as an infinite sequence of elements of $\bb Q[x_1\ldots,x_n]$. We will interpret $\cal I_n$ as a represented space with the representation $\rho\colon\subseteq \baire\to \cal I_n$ that maps each enumeration of an ideal $I\in \cal I_n$ to $I$. This representation is admissible with respect to the topology on $\cal I_n$ generated by $\uparrow\!\langle r\rangle=\{I\in\cal I_n \,|\, r\in I\}$, where $r$ varies over elements of $\bb Q[x_1\ldots,x_n]$. Note that this topology is very far from being Hausdorff.

Let ${\bf G}_n$ be the set of finite subsets of $\bb Q[x_1\ldots,x_n]$. We will think of ${\bf G}_n$ as the set of Gr\"{o}bner bases for ideals in $\cal I_n$ (for some predefined monomial order). We think of each Gr\"{o}bner basis in ${\bf G}_n$ as being represented by a finite terminated string, hence ${\bf G}_n$ carries the discrete topology. 

Let $f_n\colon \cal I_n \to {\bf G}_n$ be the function that maps each $I\in \cal I_n$ to its unique Gr\"{o}bner basis. Intuitively, $f_n$ embodies the problem of converting an infinite enumeration of an ideal of $\bb Q[x_1\ldots,x_n]$ into a finite Gr\"{o}bner basis for the ideal. 

The next theorem immediately follows from work by S. Simpson \cite{simpson1988} and F. Stephan and Y. Ventsov \cite{stephan_ventsov}, so we omit the proof.

\begin{theorem}
The functions $f_n\colon \cal I_n \to {\bf G}_n$ are $j_{\omega^n}$-realizable for each $n\in\omega$. In fact, $Lev(f_n)=\omega^n$.
\end{theorem}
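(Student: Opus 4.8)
The plan is to establish two separate bounds, $Lev(f_n) \leq \omega^n$ and $Lev(f_n) \geq \omega^n$, and to do so by connecting $f_n$ to the ordinal mind-change complexity of learning ideals in the sense of Stephan--Ventsov, together with the reverse-mathematical strength of Hilbert's basis theorem established by Simpson. For the upper bound, the key is to produce an $\omega^n$-indexing function $\iota \colon \cal I_n \to (\omega^n)^{op}$ in the sense of the previous subsection. Given a partial enumeration of an ideal $I \in \cal I_n$, one computes (from the finitely many polynomials seen so far) the ideal they generate, together with a Gr\"obner basis guess; the ``index'' assigned is an ordinal below $\omega^n$ that bounds the number of further revisions the Gr\"obner basis can undergo. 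The existence of such a bound, and the fact that $\omega^n$ suffices, is exactly the content of Stephan--Ventsov's analysis showing that $\bb Q[x_1,\ldots,x_n]$ has ordinal mind-change complexity $\omega^n$ for the ideal-membership learning problem; one translates their result into the statement that $\iota$ is continuous into $(\omega^n)^{op}$ and that along any sequence $\xi \colon \omega_\infty \to \cal I_n$ with $\iota \circ \xi$ eventually constant, $f_n \circ \xi$ is continuous (i.e. the Gr\"obner basis stabilizes). By the characterization theorem for $j_\alpha$-realizability, this gives $f_n$ is $\bdiff{\omega^n}$-piecewise continuous, hence $j_{\omega^n}$-realizable and $Lev(f_n) \leq \omega^n$.

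For the lower bound $Lev(f_n) \geq \omega^n$, I would argue contrapositively using the recursive definition of $\cal L_\alpha(f_n)$ from Definition~\ref{def:levelofdiscont}: one exhibits, for each $\beta < \omega^n$, an ideal $I$ that survives into $\cal L_\beta(f_n)$, i.e. a point at which the restriction $f_n|_{\cal L_\beta(f_n)}$ is still discontinuous. Concretely this amounts to constructing, for every $\beta < \omega^n$, a chain of ideals realizing $\beta$-many ``essential'' revisions of the Gr\"obner basis, which is again precisely the combinatorial heart of the Stephan--Ventsov lower bound (the polynomial ring $\bb Q[x_1,\ldots,x_n]$ admits ascending chains of ideals whose associated mind-change ordinal is cofinal in $\omega^n$), and is closely parallel to the way Simpson's reverse-mathematical argument extracts $\omega^\omega$ (for unbounded $n$) or $\omega^n$ (for fixed $n$) from iterated applications of the basis theorem. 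One must check that these witnessing chains can be arranged so that the relevant ideals genuinely lie in the successive closed sets $\cal L_\beta(f_n)$ in the non-Hausdorff topology on $\cal I_n$ generated by the sets $\uparrow\!\langle r\rangle$; since that topology is the Scott-style topology in which an ideal is approximated by its finite subsets, an ideal is a limit point of a set of ideals exactly when it is the union of a strictly increasing chain from that set, which matches the ascending-chain picture well.

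The main obstacle I expect is the lower bound bookkeeping: translating the inductive-inference notion of ordinal mind-change complexity (which is stated in terms of learners and their revision counts) faithfully into Hertling's topological levels $\cal L_\beta$, and in particular verifying that the closure operations in the definition of $\cal L_{\beta+1}$ do not collapse the hierarchy of witnessing ideals. One has to be careful that taking closures in the coarse topology on $\cal I_n$ does not accidentally remove the points witnessing discontinuity at stage $\beta$ — equivalently, one needs that the witnessing ideals at level $\beta$ are not isolated among the ideals witnessing discontinuity at lower levels. Since the paper explicitly declines to give the proof (``so we omit the proof''), I would present this as a reduction: cite Stephan--Ventsov for the exact value $\omega^n$ of the ordinal mind-change complexity and Simpson for the reverse-mathematics counterpart, invoke the equivalence (established in the $j_\alpha$-characterization theorem and the surrounding discussion) between $j_\alpha$-realizability, $\bdiff\alpha$-piecewise continuity, and $Lev(\cdot) \leq \alpha$, and then note that the two bounds together pin down $Lev(f_n) = \omega^n$.
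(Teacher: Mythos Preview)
Your proposal is correct and aligns with the paper's own treatment: the paper explicitly omits the proof, stating only that the result ``immediately follows from work by S. Simpson and F. Stephan and Y. Ventsov,'' and your sketch is precisely the natural way to unpack that citation via the $j_\alpha$-characterization theorem proved earlier. One small correction: in the topology on $\cal I_n$, an ideal $I$ lies in the closure of a set $S$ whenever every finite subset of $I$ is contained in some member of $S$, not only when $I$ is a union of a chain from $S$; but this broader description of limit points only makes the lower-bound witnesses easier to place inside the sets $\cal L_\beta(f_n)$, so it does not harm your argument.
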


Hilbert's basis theorem holds for all $n\in\omega$, so it is natural to consider the function $\forall_n f_n$ corresponding to universal quantification over $\omega$. The most natural interpretation for such a function is to simply take the disjoint union of all of the $f_n$. Then $\forall_n f_n$ essentially takes some $n\in\omega$ as initial input, and then operates like $f_n$ thereafter. It is easy to see that $Lev(\forall_n f_n) = \omega^\omega$, which is consistent with S. Simpson's \cite{simpson1988} characterization of Hilbert's basis theorem.

%%%%%%%%%%%%%%%%%%%%%
\subsection{$\bdelta 2$-measurable functions and the Jayne-Rogers theorem}
%%%%%%%%%%%%%%%%%%%%%

Recall that a function $f\colon X\to Y$ is \emph{$\bdelta 2$-measurable} if and only if $f^{-1}(U)\in\bdelta 2(X)$ for each open $U\subseteq Y$. Note that this is equivalent to requiring that $f^{-1}(A)\in\bsigma 2(X)$ for each $A\in\bsigma 2(Y)$.

The following is a slight generalization of a theorem by J. E. Jayne and C. A. Rogers \cite{jayne_rogers}. A much simpler proof of the original theorem was given by L. Motto Ros and B. Semmes \cite{ros_semmes, kacena_etal}. The original version of the Jayne-Rogers theorem only applied to functions that had a metrizable domain.

In the following, an \emph{analytic space} is a topological space that has an admissible representation with analytic domain. For countably based $T_0$-spaces, this is easily seen to be equivalent to the space being homeomorphic to an analytic subset of a quasi-Polish space \cite{debrecht2013}. 

\begin{theorem}[Jayne and Rogers]\label{thrm:jaynerogers}
Assume $X$ is an analytic countably based $T_0$-space and $Y$ is a separable metrizable space. Then a function $f\colon X\to Y$ is $\bdelta 2$-measurable if and only if it is $\bdelta 2$-piecewise continuous.
\end{theorem}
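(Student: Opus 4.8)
The plan is to prove the harder direction, namely that a $\bdelta 2$-measurable function $f\colon X\to Y$ is $\bdelta 2$-piecewise continuous; the converse (piecewise continuity implies $\bdelta 2$-measurability) is immediate from the fact that a $\bdelta 2$-partition of $X$ into pieces on which $f$ is continuous pulls back open sets to $\bsigma 2$-sets via countable unions. The key idea is to reduce the general case to the known Jayne--Rogers theorem for metrizable domains by lifting everything to the representation level. Concretely, let $\rho_X\colon\subseteq\baire\to X$ be an admissible representation of $X$ with analytic domain; we may take $\rho_X$ to be an open map with Polish (indeed, closed-in-$\baire$) fibers, and we may regard $dom(\rho_X)$ as an analytic subset of the Polish space $\baire$. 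Since $f$ is $\bdelta 2$-measurable and $\rho_X$ is continuous, $f\circ\rho_X\colon dom(\rho_X)\to Y$ is $\bdelta 2$-measurable as a function on the separable metrizable (in fact analytic) space $dom(\rho_X)$, with $Y$ separable metrizable.

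The second step is to apply the metrizable Jayne--Rogers theorem, or rather the version valid for analytic metrizable domains (which follows from the Motto Ros--Semmes proof \cite{ros_semmes, kacena_etal}), to conclude that $f\circ\rho_X$ is $\bdelta 2$-piecewise continuous: there is a $\bdelta 2$-measurable indexing function $\iota'\colon dom(\rho_X)\to\omega$ such that $f\circ\rho_X$ restricted to each $\iota'^{-1}(n)$ is continuous. The third and crucial step is to push $\iota'$ down to a $\bdelta 2$-indexing function $\iota\colon X\to\omega$ for $f$ itself, so that Theorem \ref{thrm:jdelta_char} (or rather the definition of $\bdelta 2$-piecewise continuity for countably based spaces) applies. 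This is exactly the construction carried out in the first half of the proof of Theorem \ref{thrm:jdelta_char}: write $\iota'^{-1}(n)=\bigcup_i A^n_i$ with $A^n_i$ closed in $dom(\rho_X)$, let $\{B_k\}$ be a countable basis of $\baire$, put $U^k_{n,i}=\rho_X(B_k)$ and $V^k_{n,i}=\rho_X(B_k\setminus A^n_i)$ (open in $X$ since $\rho_X$ is open), use the Baire category theorem on the Polish fibers $\rho_X^{-1}(x)$ to see that every $x$ lies in some $U^k_{n,i}\setminus V^k_{n,i}$, and define $\iota(x)=\langle k,n,i\rangle$ for the least such triple. One then checks, exactly as in Theorem \ref{thrm:jdelta_char}, that $\iota$ is $\bdelta 2$-measurable and that whenever $\iota\circ\xi$ is eventually constant for a convergent sequence $\xi\colon\omega_\infty\to X$, lifting through admissibility of $\rho_X$ shows $\iota'\circ\xi'$ is eventually constant, so $f\circ\xi=f\circ\rho_X\circ\xi'$ is continuous; hence $\iota$ is a $\bdelta 2$-indexing function for $f$ and $f$ is $\bdelta 2$-piecewise continuous.

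The main obstacle is ensuring that the classical Jayne--Rogers theorem really does apply to $f\circ\rho_X$: the domain $dom(\rho_X)$ is only analytic, not Polish, so one must invoke the strengthening of Jayne--Rogers to analytic metrizable domains rather than the textbook Polish version — this is where the hypothesis that $X$ is an \emph{analytic} countably based $T_0$-space is used, and where a pointer to the literature \cite{ros_semmes, kacena_etal, kihara2013} is essential. A secondary technical point is justifying the normalizations on $\rho_X$ (open map, Polish fibers); these are standard for admissible representations of countably based $T_0$-spaces and are already used without further comment in the proof of Theorem \ref{thrm:jdelta_char}, so it suffices to recall them. Once those inputs are granted, the descent from the representation level to $X$ is a verbatim repetition of the argument already given, so the proof is short: combine the metrizable Jayne--Rogers theorem with the first half of the proof of Theorem \ref{thrm:jdelta_char}.
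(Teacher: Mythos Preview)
Your proposal is correct and follows essentially the same approach as the paper: lift $f$ to $f\circ\rho_X$ on the analytic metrizable domain $dom(\rho_X)$, apply the classical (metrizable) Jayne--Rogers theorem there, and then descend back to $X$. The only difference is packaging: where you explicitly re-run the first half of the proof of Theorem~\ref{thrm:jdelta_char} to push the indexing function $\iota'$ down to $\iota$, the paper simply observes that a continuous $j_\Delta$-realizer $F$ of $f\circ\rho_X$ is automatically a $j_\Delta$-realizer of $f$, and then invokes Theorem~\ref{thrm:jdelta_char} as a black box in both directions---this makes the descent a one-liner rather than a paragraph, but the underlying argument is identical.
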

\begin{proof}
Let $\rho_X$ be an admissible representation of $X$ with analytic domain. Then $f\circ \rho_X$ is a $\bdelta 2$-measurable function from an analytic metrizable space into a metrizable space, hence $f\circ \rho_X$ is $\bdelta 2$-piecewise continuous by the original Jayne-Rogers theorem \cite{jayne_rogers,ros_semmes, kacena_etal}. It follows that there is a continuous $j_\Delta$-realizer $F$ of $f\circ\rho_X$. Clearly, $F$ is a $j_\Delta$-realizer of $f$, hence $f$ is $\bdelta 2$-piecewise continuous. 
\end{proof}

A natural question is how much the constraints on the domain and codomain can be relaxed. Based on S. Solecki's work in \cite{solecki1994} and its applications to the Jayne-Roger's theorem \cite{solecki1998} (but see also \cite{kacena_etal}), we conjecture that it is consistent with ZFC to allow the domain to be ${\bf \Sigma}^1_2$ and possibly any other level of the projective hierarchy. With respect to the codomain, however, the following example shows that some separation axiom, such as regularity\footnote{Luca Motto Ros has pointed out to the author that the proof in \cite{kacena_etal} suggests regularity is a sufficient criterion on the codomain for Theorem \ref{thrm:jaynerogers} to hold, even in the absence of separability and metrizability.}, is required.

\begin{example}\label{ex:bdelta2_meas_no_piecewise}
We consider two topologies on the ordinal $\omega+1 = \{0,1,2,\ldots, \omega\}$. The first topology, $\tau_1$, is the Scott-topology, and is generated by the open sets ${\uparrow\!n} = \{ \beta \in\omega+1\,|\, n\leq \beta\}$ for $n<\omega$. The second topology, $\tau_2$, is defined so that a non-empty subset $U\subseteq \omega+1$ is open if and only if $\omega\in U$ and all but finitely many $n\in\omega$ are in $U$. The topological space $(\omega+1, \tau_1)$ is one of the simplest examples of an infinite $\omega$-continuous domain \cite{etal_scott}. The topological space $(\omega+1, \tau_2)$ is homeomorphic to the Zariski topology on the prime spectrum of the ring of integers. Both $\tau_1$ and $\tau_2$ are quasi-Polish topologies on $\omega+1$. Furthermore, in both of these spaces the singleton set $\{\omega\}$ is $\bpi 2$ but \emph{not} $\bsigma 2$, hence these spaces fail the $T_D$-separation axiom of Aull and Thron \cite{aull_thron} (see also \cite{debrecht2013,debrecht_etal_3} for more on the $T_D$-axiom).

The function $f\colon (\omega+1, \tau_1) \to (\omega+1, \tau_2)$, defined to behave as the identity on $\omega+1$, is a $\bdelta 2$-measurable function that is not $\bdelta 2$-piecewise continuous. Since $(\omega+1, \tau_1)$ is quasi-Polish, it follows from \cite{debrecht2013} that it has a total admissible representation $\rho\colon\baire\to (\omega+1,\tau_1)$. As any $j_\Delta$-realizer of $f\circ\rho$ would be a $j_\Delta$-realizer of $f$, we see that the function $f\circ\rho \colon \baire\to (\omega+1, \tau_2)$ is an example of a function with Polish domain which is $\bdelta 2$-measurable but not $\bdelta 2$-piecewise continuous. 
\qed
\end{example}

Let $\cal F$ be a class of functions between (admissibly represented) topological spaces. We will say that a jump operator $j$ \emph{captures} the class $\cal F$ if it holds that $f\in\cal F$ if and only if $f$ is $j$-realizable. Note that such a $j$ must be in $\cal F$ because $j$ is trivially $j$-realizable.

If we let $\cal F$ be the class of $\bdelta 2$-measurable functions with (countably based) analytic domain and metrizable codomain, then the Jayne-Rogers theorem states that $j_\Delta$ captures $\cal F$. However, the example above shows that $j_\Delta$ does not capture the class of $\bdelta 2$-measurable functions between arbitrary countably based $T_0$-spaces.  One might wonder if some other jump operator might capture this larger class, but the following result shows that this is not possible.

\begin{proposition}
There is no jump operator that captures the entire class of $\bdelta 2$-measurable functions between countably based $T_0$-spaces. 
\end{proposition}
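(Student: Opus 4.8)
The plan is to argue by contradiction, leveraging the function from Example~\ref{ex:bdelta2_meas_no_piecewise}: any jump operator realizing every $\bdelta 2$-measurable function would, by the Jayne--Rogers theorem, realize that particular function \emph{piecewise-continuously}, which the example forbids. The point worth stressing in advance is that nothing about the jump operator itself is used; the whole argument takes place over $\baire$.

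Suppose then that $j$ is a jump operator that captures the class of $\bdelta 2$-measurable functions between countably based $T_0$-spaces. Recall from Example~\ref{ex:bdelta2_meas_no_piecewise} the identity map $f\colon(\omega+1,\tau_1)\to(\omega+1,\tau_2)$ and the total admissible representation $\rho\colon\baire\to(\omega+1,\tau_1)$, and set $g=f\circ\rho\colon\baire\to(\omega+1,\tau_2)$, which is total, $\bdelta 2$-measurable, but not $\bdelta 2$-piecewise continuous. By the capture hypothesis $g$ is $j$-realizable, so there is a continuous $F$ with $j\circ F\vdash g$; fixing an admissible (hence continuous) representation $\rho_Z$ of $(\omega+1,\tau_2)$ this reads $g=\rho_Z\circ j\circ F$. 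Since $g$ is total on $\baire$, which carries the identity representation, $j\circ F$ must be defined on all of $\baire$; in particular $F$ is total and $h:=j\circ F\colon\baire\to\baire$ is a total function.

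Next I would observe that $h$ is $\bdelta 2$-measurable: indeed $F$ is itself a $j$-realizer of $h$ (both copies of $\baire$ carry the identity representation and $j\circ F=h$), so $h$ is $j$-realizable and the capture hypothesis applies; equivalently, $j$ is $\bdelta 2$-measurable because it is trivially $j$-realizable, and $h=j\circ F$ inherits this since $F$ is continuous. Thus $h\colon\baire\to\baire$ is a total $\bdelta 2$-measurable map from an analytic countably based $T_0$-space into a separable metrizable space, so Theorem~\ref{thrm:jaynerogers} yields sets $A_i\in\bdelta 2(\baire)$ with $\baire=\bigcup_i A_i$ and each $h|_{A_i}$ continuous (one could also invoke Andretta's characterization of $j_\Delta$-realizability on $\baire$ directly). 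Finally, since $g=\rho_Z\circ h$ with $\rho_Z$ continuous, each restriction $g|_{A_i}=\rho_Z\circ h|_{A_i}$ is continuous, so the $A_i$ exhibit $g$ as $\bdelta 2$-piecewise continuous, contradicting Example~\ref{ex:bdelta2_meas_no_piecewise}. Hence no jump operator captures the class of $\bdelta 2$-measurable functions between countably based $T_0$-spaces.

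The one step that needs care is the reduction to a \emph{total} function on $\baire$: one must confirm that, because $g$ is total and $\baire$ is identically represented, any continuous $j$-realizer of $g$ postcomposes with $j$ to a function defined on \emph{all} of $\baire$, so that the classical metrizable-domain Jayne--Rogers theorem applies to $h$ with no hypothesis whatsoever on the (possibly pathological) domain of $j$. I expect this is the only place where one could slip; everything else is bookkeeping with realizers and the trivial observation that continuous images of $\bdelta 2$-pieces stay continuous.
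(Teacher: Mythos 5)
Your proof is correct and follows essentially the same route as the paper's: reduce to the example function precomposed with a total admissible representation, note that the capture hypothesis forces $j\circ F$ (equivalently $j$ itself) to be $\bdelta 2$-measurable, apply the metrizable-domain Jayne--Rogers theorem on $\baire$, and transport the resulting $\bdelta 2$-piecewise decomposition back to contradict Example~\ref{ex:bdelta2_meas_no_piecewise}. Your explicit attention to the totality of $j\circ F$ is a point the paper passes over more quickly, but the argument is the same.
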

\begin{proof}
Assume for a contradiction that $j$ captures the entire class of $\bdelta 2$-measurable functions between countably based $T_0$-spaces. Let $f\colon ({\omega+1}, \tau_1) \to (\omega+1, \tau_2)$ be the $\bdelta 2$-measurable function from Example \ref{ex:bdelta2_meas_no_piecewise}. We can assume that $({\omega+1}, \tau_1)$ has a total admissible representation, hence there is a total continuous $F\colon \baire\to\baire$ that $j$-realizes $f$. The Jayne-Rogers theorem now implies that the $\bdelta 2$-measurable function $j\circ F\colon \baire\to\baire$ has a continuous $j_\Delta$-realizer $F'\colon\baire\to\baire$. However, $F'$ would then be a continuous $j_\Delta$-realizer of $f$, which is a contradiction.
\end{proof}

By applying the same argument to the function $f\circ \rho$ from Example \ref{ex:bdelta2_meas_no_piecewise}, it can be seen that the above proposition holds true even if we further restrict to $\bdelta 2$-measurable functions with Polish domain.
 
%%%%%%%%%%%%%%%%%%%%%%%%%%%%%%%%%%%%%%%%%%%%%%%%%%%%%%%%
\subsection{A generalization of the Hausdorff-Kuratowski theorem}
%%%%%%%%%%%%%%%%%%%%%%%%%%%%%%%%%%%%%%%%%%%%%%%%%%%%%%%%

The Hausdorff-Kuratowski theorem (see \cite{kechris}) states that the difference hierarchy on a Polish space exhausts all of the $\bdelta 2$-sets. The full version of the theorem actually applies to all levels of the Borel hierarchy. It was observed by V. Selivanov \cite{selivanov2004,selivanov2008} that the Hausdorff-Kuratowski theorem holds for some important non-metrizable spaces such as $\omega$-continuous domains. Later it was shown that the full version of the Hausdorff-Kuratowski theorem holds for all quasi-Polish spaces \cite{debrecht2013}.

In addition to extending the Hausdorff-Kuratowski theorem to a more general class of spaces, V. Selivanov has also generalized the theorem from being a classification of sets to a classification of functions \cite{selivanov2004}. In particular, it was observed in \cite{selivanov2004} that each $\bdelta 2$-measurable function $f$ from a Polish space into a finite discrete space will satisfy $Lev(f)=\alpha$ for some $\alpha<\omega_1$. In this section, we will extend this result to show that any $\bdelta 2$-measurable function $f$ from a quasi-Polish space to a separable metrizable space will satisfy $Lev(f)=\alpha$ for some $\alpha<\omega_1$. Given the connections between P. Hertling's levels of discontinuity and the difference hierarchy, our result is a very broad generalization of the Hausdorff-Kuratowski theorem restricted to $\bdelta 2$-sets. L. Motto Ros \cite{ros_thesis} has independently made a similar observation for $\bdelta 2$-measurable functions on metrizable spaces.

As in the original proof of the Hausdorff-Kuratowski theorem, the Baire category theorem plays an important role in our generalized result as well. One version of the Baire category theorem states that if a Polish space is equal to the union of a countable family of closed sets, then one of the closed sets must have non-empty interior. Clearly, the same statement holds for Polish spaces if we replace ``closed'' by either ``$F_\sigma$'' or ``$\bsigma 2$''. However, since the equivalence between $F_\sigma$-sets and $\bsigma 2$-sets breaks down for non-metrizable spaces, the version of the Baire category theorem presented in the following lemma is more appropriate in general. This generalization of the Baire category theorem has already been investigated by R. Heckmann \cite{heckmann} and by V. Becher and S. Grigorieff \cite{becher_grigorieff}.

\begin{lemma}\label{lem:quasipolish_bairecategory}
Assume $X$ is quasi-Polish and $\{A_i\}_{i\in\omega}$ is a family of sets from $\bsigma 2(X)$ such that $X=\bigcup_{i\in\omega} A_i$. Then there is $i\in\omega$ such that $A_i$ has non-empty interior. Equivalently, the intersection of a countable family of dense $\bpi 2$-subsets of a quasi-Polish space is a dense $\bpi 2$-set.
\end{lemma}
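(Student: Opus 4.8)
The plan is to reduce the statement to the classical Baire category theorem for Polish spaces, by exploiting the defining property of quasi-Polish spaces: $X$ is homeomorphic to a $\bpi 2$-subset of $\cal P(\omega)$, and more usefully, a countably based space is quasi-Polish if and only if it has a total admissible representation $\rho\colon\baire\to X$ by a \emph{total} continuous open surjection whose fibers $\rho^{-1}(x)$ are Polish (this is the representation we may assume, as used in the proofs of Theorems~\ref{thrm:jdelta_char} and~\ref{thrm:jalpha_char}). Actually, for this lemma the cleaner route is to pull everything back along such a $\rho$: if $X=\bigcup_{i\in\omega}A_i$ with $A_i\in\bsigma 2(X)$, then $\baire=\bigcup_{i\in\omega}\rho^{-1}(A_i)$, and since $\rho$ is continuous, $\rho^{-1}(A_i)\in\bsigma 2(\baire)=\boldsymbol{\Sigma}^0_2(\baire)$ in the classical sense. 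Now $\baire$ is Polish, so the classical Baire category theorem (in its $F_\sigma$/$\bsigma 2$ form, which as noted in the excerpt is valid for Polish spaces) gives some $i$ with $\rho^{-1}(A_i)$ having non-empty interior in $\baire$; say a basic open $B_k\subseteq\rho^{-1}(A_i)$ with $B_k\neq\emptyset$. Then $\rho(B_k)$ is a non-empty open subset of $X$ (openness of $\rho$) contained in $A_i$, so $A_i$ has non-empty interior. This handles the first assertion.

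For the ``equivalently'' clause, I would argue it is a formal restatement. First note that a $\bpi 2$-subset of a quasi-Polish space is itself quasi-Polish (this is in \cite{debrecht2013}), so it makes sense to ask about density. Suppose $\{D_i\}_{i\in\omega}$ is a countable family of dense $\bpi 2$-subsets of $X$; write $D_i=X\setminus A_i$ with $A_i\in\bsigma 2(X)$, and note that density of $D_i$ means $A_i$ has empty interior. Set $D=\bigcap_i D_i=X\setminus\bigcup_i A_i$. To see $D$ is dense: given any non-empty open $U\subseteq X$, $U$ is itself quasi-Polish, and if $D\cap U=\emptyset$ then $U=\bigcup_i(A_i\cap U)$ with each $A_i\cap U\in\bsigma 2(U)$; by the first part, some $A_i\cap U$ has non-empty interior in $U$, hence in $X$, contradicting that $A_i$ has empty interior. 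So $D$ meets every non-empty open set, i.e.\ $D$ is dense. That $D$ is a $\bpi 2$-set is immediate: a countable intersection of $\bpi 2$-sets is $\bpi 2$ (this follows directly from the definition of the Selivanov hierarchy, or from closure of $\bsigma 2$ under countable unions). Conversely the first statement follows from the density statement by contraposition applied to $D_i=X\setminus\mathrm{int}(A_i)\cdots$; in fact it is cleanest to just prove the first statement directly as above and derive the second, rather than trying to make the equivalence an honest iff at the level of individual families.

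The main obstacle — really the only non-bookkeeping point — is justifying that one may pull the problem back to $\baire$ faithfully: specifically, that a total admissible representation of a quasi-Polish space can be taken to be an open map (so that images of basic open sets are open) and total (so that $\baire=\bigcup_i\rho^{-1}(A_i)$ with no domain issues), and that preimages under it of $\bsigma 2(X)$-sets are genuinely $\boldsymbol{\Sigma}^0_2$ in $\baire$. The first two facts are exactly the normal-form results for quasi-Polish representations from \cite{debrecht2013} already invoked earlier in this paper; the third is continuity of $\rho$ together with the fact that, on the metrizable space $\baire$, Selivanov's $\bsigma 2$ coincides with the classical $\boldsymbol{\Sigma}^0_2$. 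An alternative, avoiding representations entirely, is to use the characterization of $X$ as a $\bpi 2$-subspace of $\cal P(\omega)$ and run the Baire category argument inside $\cal P(\omega)$ relativized to $X$; but since $X$ need not be $G_\delta$ in $\cal P(\omega)$ in the classical sense, one would still need the quasi-Polish-specific fact that $\bpi 2$-subspaces carry a Smyth-complete quasi-metric, so the representation-pullback route is the more economical one. Either way, once the pullback is set up, the heart of the proof is a single application of the classical Baire category theorem, and the rest is routine topology.
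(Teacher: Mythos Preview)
Your proof is correct and follows essentially the same route as the paper: pull the cover back along a total continuous open surjection $\baire\to X$ (which exists by \cite{debrecht2013}), apply the classical Baire category theorem on $\baire$, and push the resulting interior forward via openness. The paper's proof is exactly this argument in two lines and does not spell out the ``equivalently'' clause, which you handle correctly (your use of the fact that non-empty open subsets of quasi-Polish spaces are quasi-Polish is fine and is in \cite{debrecht2013}).
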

\begin{proof}
Let $f\colon \baire \to X$ be an open continuous surjection (see \cite{debrecht2013}) and let $B_i=f^{-1}(A_i)$. Each $B_i$ is a $\bsigma 2$-subset of a metrizable space hence equal to a countable union of closed sets. Since $\baire = \bigcup_{i\in\omega}B_i$, the Baire category theorem for Polish spaces implies there is $i\in\omega$ such that $B_i$ has non-empty interior. It follows that $A_i$ has non-empty interior because $f$ is an open map.
\end{proof}

\begin{theorem}
If $X$ is quasi-Polish and $Y$ is a countably based $T_0$-space, then $f\colon X\to Y$ is $j_\Delta$-realizable if and only if it is $j_\alpha$-realizable for some $\alpha<\omega_1$.
\end{theorem}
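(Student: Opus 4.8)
The ``if'' direction is immediate: $j_\alpha\leq j_\Delta$ for every $\alpha<\omega_1$, as was noted earlier in the discussion of the lattice of jump operators, so $\vdash_{j_\alpha}f$ implies $\vdash_{j_\Delta}f$. For the substantive direction, the plan is to reduce to a statement about Hertling levels and then run a Cantor--Bendixson-style derivation against the quasi-Polish Baire category theorem. Since $X$ is quasi-Polish it is admissibly represented, countably based, and $T_0$, and $Y$ likewise admits an admissible representation; hence by Theorem~\ref{thrm:jdelta_char} a $j_\Delta$-realizable $f$ is exactly a $\bdelta 2$-piecewise continuous function, and by the characterization of $j_\alpha$-realizability ($f$ is $j_\alpha$-realizable iff $Lev(f)\leq\alpha$) it suffices to prove $Lev(f)<\omega_1$, for then $f$ is $j_\alpha$-realizable with $\alpha=Lev(f)$. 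So I would fix a witness $X=\bigcup_{i\in\omega}A_i$ with $A_i\in\bdelta 2(X)$ and each $f|_{A_i}$ continuous, and show $Lev(f)\neq\infty$.

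Because $X$ is countably based, the decreasing transfinite sequence $(\cal L_\beta(f))_\beta$ of closed sets stabilizes; let $\alpha_0<\omega_1$ be least with $\cal L_{\alpha_0}(f)=\cal L_{\alpha_0+1}(f)$ and put $C=\cal L_{\alpha_0}(f)$. I claim $C=\emptyset$, which gives $Lev(f)\leq\alpha_0<\omega_1$. Suppose instead $C\neq\emptyset$. Then $C$ is closed in $X$, hence a $\bpi 2$-subspace, so $C$ is again quasi-Polish by \cite{debrecht2013}; and since the Selivanov Borel hierarchy restricts downward along subspaces, each $A_i\cap C$ lies in $\bdelta 2(C)\subseteq\bsigma 2(C)$, so $C=\bigcup_{i\in\omega}(A_i\cap C)$ exhibits $C$ as a countable union of $\bsigma 2$-sets on each of which $f$ restricts continuously. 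Lemma~\ref{lem:quasipolish_bairecategory}, applied inside $C$, then yields some $j$ for which $A_j\cap C$ has non-empty interior $V$ in $C$; as $V$ is open in $C$ and $f|_V$ is continuous, $f|_C$ is continuous at every point of $V$. On the other hand $C$ is closed in $X$, so for any $S\subseteq C$ the closure of $S$ in $C$ equals its closure in $X$; therefore $\cal L_1(f|_C)$ --- by definition the closure in $C$ of the set of discontinuity points of $f|_C$ --- equals the closure in $X$ of that set, which is exactly $\cal L_{\alpha_0+1}(f)=C$. Hence the discontinuity points of $f|_C$ are dense in $C$, so $V$ must contain one, contradicting the previous sentence. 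Thus $C=\emptyset$ and $Lev(f)<\omega_1$.

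I do not expect a real obstacle here; the argument is an assembly of facts that are already available. The two points that need care are: (i) checking that both $\bdelta 2$-piecewise continuity and quasi-Polishness descend to the closed subspace $C$, so that Lemma~\ref{lem:quasipolish_bairecategory} may legitimately be invoked relative to $C$; and (ii) the bookkeeping identity $\cal L_1(f|_C)=\cal L_{\alpha_0+1}(f)=C$, which is precisely what converts a set of local continuity into a contradiction. It is worth emphasizing that quasi-Polishness is used only through Lemma~\ref{lem:quasipolish_bairecategory}: over a general countably based $T_0$-space a covering by $\bsigma 2$-sets of continuity need not contain a member with non-empty interior, so the argument --- and the theorem itself --- genuinely requires it, in accordance with Example~\ref{ex:bdelta2_meas_no_piecewise}.
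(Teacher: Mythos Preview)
Your proof is correct and follows essentially the same route as the paper: stabilize the Hertling derivation at some $\alpha_0<\omega_1$, observe that $C=\cal L_{\alpha_0}(f)$ is closed hence quasi-Polish, apply Lemma~\ref{lem:quasipolish_bairecategory} to a $\bdelta 2$-cover of $C$ by sets of continuity, and derive a contradiction from $\cal L_{\alpha_0+1}(f)=C$. The only cosmetic difference is that you restrict the original cover $\{A_i\}$ to $C$, whereas the paper re-invokes $j_\Delta$-realizability of $f|_C$ to obtain a fresh cover; your closing appeal to Example~\ref{ex:bdelta2_meas_no_piecewise} is slightly off-target (that example has quasi-Polish domain and concerns $\bdelta 2$-measurability versus $j_\Delta$-realizability, not $j_\Delta$ versus $j_\alpha$), but this does not affect the argument.
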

\begin{proof}
Assume $f\colon X\to Y$ is $j_\Delta$-realizable. Let $\alpha<\omega_1$ be the least ordinal such that $\cal L_{\alpha}(f) = \cal L_{\alpha+1}(f)$, which exists because $X$ is countably based. Assume for a contradiction that $\cal L_{\alpha}(f)\not=\emptyset$. Clearly, $f|_{\cal L_{\alpha}(f)}$ is $j_\Delta$-realizable, hence there is a $\bdelta 2$-partitioning  $\{A_i\}_{i\in\omega}$ of $\cal L_{\alpha}(f)$ such that the restriction of $f$ to $A_i$ is continuous for each $i\in\omega$. Note that $\cal L_{\alpha}(f)$ is quasi-Polish because it is a closed subset of the quasi-Polish space $X$. Therefore, Lemma \ref{lem:quasipolish_bairecategory} applies and there is $i\in\omega$ such that $A_i$ has non-empty interior relative to $\cal L_{\alpha}(f)$. But then $f|_{\cal L_{\alpha}(f)}$ is continuous on a non-empty open subset of $\cal L_{\alpha}(f)$, contradicting our assumption that $\cal L_{\alpha}(f) = \cal L_{\alpha+1}(f)$. Thus, $\cal L_{\alpha}(f)$ is empty and it follows that $Lev(f)= \alpha<\omega_1$.

The converse holds for all represented spaces because $j_\alpha\leq j_\Delta$.
\end{proof}

Combining the above result with the Jayne-Rogers theorem yields the following generalization of the Hausdorff-Kuratowski theorem.

\begin{theorem}
If $X$ is quasi-Polish and $Y$ is a separable metrizable space, then $f\colon X\to Y$ is $\bdelta 2$-measurable if and only if it is $j_\alpha$-realizable for some $\alpha<\omega_1$.
\qed
\end{theorem}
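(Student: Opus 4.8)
The plan is to reduce the statement directly to two results already available in the excerpt, namely the Jayne--Rogers theorem (Theorem~\ref{thrm:jaynerogers}) and the immediately preceding theorem, which asserts that for quasi-Polish $X$ and countably based $T_0$ space $Y$, a function $f\colon X\to Y$ is $j_\Delta$-realizable if and only if it is $j_\alpha$-realizable for some $\alpha<\omega_1$. So the only real work is to connect $\bdelta 2$-measurability with $j_\Delta$-realizability under the stated hypotheses. The hypotheses are exactly those of the Jayne--Rogers theorem: $X$ quasi-Polish hence analytic and countably based $T_0$, and $Y$ separable metrizable.

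First I would argue the forward direction. Suppose $f\colon X\to Y$ is $\bdelta 2$-measurable. Since $X$ is quasi-Polish it is in particular an analytic countably based $T_0$-space, and $Y$ is separable metrizable, so Theorem~\ref{thrm:jaynerogers} applies and gives that $f$ is $\bdelta 2$-piecewise continuous. By Theorem~\ref{thrm:jdelta_char}, $\bdelta 2$-piecewise continuity of $f$ (between admissibly represented countably based $T_0$-spaces) is equivalent to $j_\Delta$-realizability of $f$; hence $f$ is $j_\Delta$-realizable. Now invoke the previous theorem: since $X$ is quasi-Polish and $Y$ is countably based $T_0$, $j_\Delta$-realizability of $f$ implies $j_\alpha$-realizability for some $\alpha<\omega_1$. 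This completes the forward implication.

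For the converse, suppose $f$ is $j_\alpha$-realizable for some $\alpha<\omega_1$. Since $j_\alpha\leq j_\Delta$, it follows that $f$ is $j_\Delta$-realizable, hence (again by Theorem~\ref{thrm:jdelta_char}) $\bdelta 2$-piecewise continuous. It remains to observe that $\bdelta 2$-piecewise continuity implies $\bdelta 2$-measurability: if $X=\bigcup_{i\in\omega}A_i$ with each $A_i\in\bdelta 2(X)$ and $f|_{A_i}$ continuous, then for open $U\subseteq Y$ one has $f^{-1}(U)=\bigcup_i \big((f|_{A_i})^{-1}(U)\big)$, and each $(f|_{A_i})^{-1}(U)$ is relatively open in $A_i$, hence of the form $V_i\cap A_i$ with $V_i$ open in $X$; since $V_i\cap A_i\in\bdelta 2(X)\subseteq\bsigma 2(X)$ and $\bsigma 2(X)$ is closed under countable unions, $f^{-1}(U)\in\bsigma 2(X)$. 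Applying the same reasoning to the complement (or using that $\bdelta 2$-piecewise continuity is self-dual under the covering since one can also write $f^{-1}(Y\setminus U)$ as a countable union of relatively closed pieces), one gets $f^{-1}(U)\in\bdelta 2(X)$, so $f$ is $\bdelta 2$-measurable.

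The main obstacle, such as it is, lies entirely in checking that the hypotheses of the cited theorems line up cleanly: one must be sure that a quasi-Polish space qualifies as an ``analytic countably based $T_0$-space'' in the sense of Theorem~\ref{thrm:jaynerogers} (it does, since a quasi-Polish space is homeomorphic to a $\bpi 2$-subset, a fortiori an analytic subset, of a quasi-Polish space, and it admits a total admissible representation) and that all spaces in sight carry admissible representations so that Theorem~\ref{thrm:jdelta_char} is applicable. Once those bookkeeping points are settled, the proof is a short chain of equivalences, which is presumably why the statement is presented with a \qed rather than a displayed proof.
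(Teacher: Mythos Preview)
Your proposal is correct and follows exactly the route the paper intends: the theorem is stated with a bare \qed\ precisely because it is obtained by combining Theorem~\ref{thrm:jaynerogers} (Jayne--Rogers) with the immediately preceding theorem, via Theorem~\ref{thrm:jdelta_char}. Your additional bookkeeping (that quasi-Polish spaces are analytic countably based $T_0$, and the direct verification that $\bdelta 2$-piecewise continuity implies $\bdelta 2$-measurability) is sound and simply makes explicit what the paper leaves implicit.
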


\section{Conclusions}

Although much of classical descriptive set theory has been extended to arbitrary countably based $T_0$-spaces, it is still a major open problem to understand how descriptive set theory \emph{should} work for non-countably based topological spaces and more general represented spaces. This is a very strange realm, where even singleton sets can have complexity of arbitrarily high rank in the projective hierarchy. 

The approach we have taken here with jump operators provides a general framework, with a nice categorical flavor, for which to extend descriptive set theory to more general mathematical structures. In particular, it raises natural questions concerning the structure and applications of ``higher-order'' descriptive set theoretical objects such as $\bsigma 2(\bsigma 2(X))$.

There is also a strong need for a refined analysis of the categorical logic of the category of represented spaces and realizable functions with closer attention to the ``level'' of the represented spaces. For example, the ``naive Cauchy'' representation of the real numbers in \cite{bauer}, which is obtained via a kind of double negation of the standard Cauchy representation of the reals, happens to be equivalent to the $j_{\bsigma 2}$-jump of the standard Cauchy representation of reals \cite{ziegler}. S. Hayashi \cite{hayashi2006} has also investigated connections between limit-computability and non-constructive principles such as double negation elimination and the excluded middle restricted to certain subclasses of formulae. It would be very interesting to see how these concepts are connected.

\bibliographystyle{amsplain}
\bibliography{myrefs}

\end{document}